\newcommand{\diagi}{\begin{smallmatrix}\vspace{-0.5ex}\textrm{\normalsize diag}\\\vspace{-0.8ex}i\in\mathcal{I}_{n}\end{smallmatrix}}
\newtheorem{theorem}{Theorem}
\newtheorem{lemma}{Lemma}
\newtheorem{remark}{Remark}
\DeclareMathOperator{\diag}{diag}
\title{Analysis on aggregation and block smoothers in multigrid methods for block Toeplitz linear systems
}
\author{Matthias Bolten, Marco Donatelli, Paola Ferrari, Isabella Furci}
\begin{document}

\maketitle

\begin{abstract}

We present novel improvements in the context of symbol-based multigrid procedures for solving large block structured linear systems. We study the application of an aggregation-based grid transfer operator that transforms the symbol of a block Toeplitz matrix from matrix-valued to scalar-valued at the coarser level.
Our convergence analysis of the Two-Grid Method (TGM)
reveals the connection between the features of the scalar-valued symbol at the coarser level and the properties of the original matrix-valued one.
 This allows us to prove the convergence of a V-cycle multigrid with
 standard grid transfer operators for scalar Toeplitz systems at the coarser levels.
Consequently, we extend the class of suitable smoothers for block Toeplitz matrices, focusing on the efficiency of block strategies, particularly the relaxed block Jacobi method. General conditions on smoothing parameters are derived, with emphasis on practical applications where these parameters can be calculated with negligible computational cost. 
 We test the proposed strategies on linear systems stemming from the discretization of differential problems with  $\mathbb{Q}_{d} $ Lagrangian FEM or B-spline with non-maximal regularity. The numerical results 
 show in both cases computational advantages compared to existing methods for block structured linear systems. 
\end{abstract}

65N55 (multigrid methods), 65F08   (preconditioners for iterative methods), 34L20  (eigevalue distributions), 15B05  (Toeplitz matrices)

\section{Introduction}
\label{sec:intro}

In this paper, we propose a novel symbol-based multigrid procedure designed to tackle the challenge of solving large block Toeplitz linear systems, where the coefficient matrix entries are generic small matrices instead of scalars.
Linear systems with multilevel block-Toeplitz coefficient matrices
arise in the discretization of many differential equations, like the
${\mathbb{Q}_r}$ Lagrangian finite element method (FEM)
 or B-Spline approximation of second order differential problems \cite{qp,MR4019290}.

Efficient symbol-based multigrid strategies for the solution of such linear systems have been proposed and studied in \cite{MR4389580,MR4284081}, with grid transfer operators that preserve the block structure and features of the original matrix in the coarser spaces. Here, the selection of multigrid parameters and the convergence analysis are strictly related to the properties of the matrix-valued generating functions of the block Toeplitz matrices. Furthermore, in \cite{An2024} it is shown that effective solution strategies can also be based on grid transfer operators that aggregate the unknowns and transform the block problem into a scalar one at the coarser level, with significant computational advantages. In particular, the authors prove the convergence of the Two-Grid Method (TGM) and suggest a V-cycle strategy where the properties of the scalar system at the coarser level are crucial for the convergence of the method. General convergence results for the TGM method have been established in \cite{MR2114296,MR2150164} and references therein. Moreover, automatic algorithmic proposals can be developed exploiting a posteriori evaluation of the quality of the aggregation procedure \cite{MR3264812,MR4615364}. 

The first goal of the present paper is to analyze the symbol at the coarser level to prove the convergence of the V-cycle. This analysis reveals that the features of the scalar-valued symbol at the coarser level are closely linked to the properties of the original matrix-valued one, which is crucial for two main reasons.  First, it enables the straightforward derivation of convergence conditions for the V-cycle used in \cite{An2024}. Secondly, this analysis also improves the algorithmic proposal in \cite{MR4389580,MR4284081} by simplifying the calculations to the diagonalization of the matrix-valued symbol at a singular point. Consequently, by concentrating on scalar-valued functions, we facilitate the evaluation of approximating and smoothing properties, making them more accessible for computational implementation.

Algebraic multigrid method based on smoothed aggregation have proved to be very efficient methods for the solution of symmetric, positive definite systems arising from finite element
discretization of elliptic boundary value problems \cite{BDH,MR2179900}.
Another goal of this paper is to expand the class of smoothers for block Toeplitz matrices by proving the efficiency of block strategies more suitable than the scalar ones discussed in \cite{MR4389580,MR4284081}. In particular, we focus on the relaxed block Jacobi method and we derive general conditions on the smoothing parameter for a general convergence result. Moreover, we show that in many practical applications, the smoothing parameter can be straightforwardly computed from the generating functions.

Despite the combination with stronger smoothing techniques, aggregation-based restriction strategies still lack effectiveness, as mentioned in \cite{BDH,MR2385900}. However, as Braess demonstrates in \cite{MR1370108}, the performance of these multigrid methods, particularly for second-order elliptic problems, can be significantly improved. This improvement comes from carefully adjusting the coarse grid correction, a strategy known as over-relaxation. Selecting the ideal over-relaxation parameter is non-trivial. Yet, for block Toeplitz linear systems, we introduce a method that simplifies this selection by only requiring calculations with the symbol, thereby making it applicable in real-world scenarios. To demonstrate the effectiveness of our approach, we carry out comprehensive numerical experiments, comparing the performance of our over-relaxed aggregation-based multigrid strategy with current techniques for block Toeplitz linear systems.

The paper is organized as follows. In Section \ref{sec:two_grid} we report useful notation and preliminary results on multigrid methods and block circulant and Toeplitz matrices associated with a matrix-valued function. Moreover, we summarise the multigrid convergence results for such structures in Subsection \ref{ssec:proposal_block_circ}. Section \ref{sec:proofs} establishes a theoretical foundation for the proposed method. In particular, in Subsection \ref{ssec:gtoperator_and_symbol} we define the aggregated symbol-based grid transfer operator and analyse the properties of the matrices at coarser levels. In Subsection \ref{ssec:smoothing_prop} we introduce a block Jacobi smoother in the block circulant setting and provide conditions on the smoothing parameter such that the smoothing property is rigorously proven. Moreover, in Subsection \ref{ssec:approximation_prop} we establish the convergence of the two-grid method by demonstrating its approximation property. An extension of the convergence analysis to the V-cycle method, ensuring the effectiveness of our approach across a grid hierarchy, is presented in Subsection \ref{ssec:v-cycle}. Section \ref{sec:num_experiments} is dedicated to numerical experiments with problem descriptions in Subsection \ref{ssec:example_descr}, a comparison between the scalar and block Jacobi smoothers in Subsection \ref{ssec:scalar_vs_block}, a validation of the theory on the aggregated multigrid approach in Subsection \ref{ssec:aggregation_experiments}, an overview of the over-relaxation strategy in Subsection \ref{ssec:over_relaxation}, and a comparison of the performances of all the presented multigrid methods as preconditioners in Subsection \ref{ssec:PPCG}. Section \ref{sec:conclusions} contains final remarks and future lines of research.


\section{Multigrid methods for structured linear systems}\label{sec:two_grid}
In the first part of the section we summarize the main components and properties of classical multigrid methods when applied to solve linear systems of the form
\[A_{n}x_{n}=b_{n},\]
where $A_{n}\in\mathbb{C}^{n\times n}$ is a generic positive definite matrix {\cite{Trot}}.
Then, we report the most recent convergence results when the coefficient matrix $A_{n}$ in addition is a block Toeplitz matrix associated with a matrix-valued symbol.
In the whole paper we use the following norm notation.
Given $1\le p<\infty$ and a vector $x \in \mathbb{C}^{n}$, we denote by $\|x\|_{p}$ the $p$-norm of $x$ and by $\|\cdot\|_{p}$ the associated induced matrix norm over $\mathbb{C}^{n\times n}$. If $X$ is positive definite,
$\|{v}\|_{X}={\|}X^{1/2}{v}{\|}_{2}$ {(resp. $\|Y\|_{X}=\|X^{1/2}YX^{-1/2}\|_{2}$)} denotes the Euclidean norm
weighted by $X$ on $\mathbb{C}^{n}$ { (resp. on $\mathbb{C}^{n\times n}$)}. Moreover, given a matrix-valued function  $\mathbf{f} \in L^p({Q})$ (all its components $f_{ij}:Q\to\mathbb C,\
i,j=1,\ldots, d $ belong to $L^p({Q})$) we define
$ \|\mathbf{f} \|_{\infty}={\rm ess\,sup}_{\theta\in Q}\|\mathbf{f} (\theta)\|_2$.

When considering a multigrid method with only two grids, a TGM procedure is the combination of a stationary iterative method, the pre/post smoother, and a full rank rectangular matrix $P_{n,k}\in\mathbb{C}^{n\times k}$, $k<n$, the coarse grid operator. 
Precisely, if the smoothers $\mathcal{V}_{n,\rm{pre}}$ and $\mathcal{V}_{n,\rm{post}}$ have iteration matrices ${V}_{n,\rm{pre}}$ and ${V}_{n,\rm{post}}$, one iteration of the TGM is described by Algorithm 1.

\begin{small}
\begin{algorithm}
	\caption{TGM$(A_{n},\mathcal{V}_{n,\rm{pre}}^{\nu_{\rm{pre}}},\mathcal{V}_{n,\rm{post}}^{\nu_{\rm{post}}},P_{n,k},b_{n},x_{n}^{(j)})$}
	\label{alg:Two_grid}
	\begin{algorithmic}
		\STATE{ 0. $\tilde{x}_{n}=\mathcal{V}_{n,\rm{pre}}^{\nu_{\rm{pre}}}(A_{n},{b}_{n},x_{n}^{(j)})$}
		\STATE{ 1. $r_{n}=b_{n}-A_{n}\tilde{x}_{n}$}
		\STATE{ 2. $r_{k}=P_{n,k}^{H}r_{n}$}
		\STATE{ 3. $A_{k}=P_{n,k}^{H}A_{n}P_{n,k}$}
		\STATE{ 4. Solve $A_{k}y_{k}=r_{k}$}
		\STATE{ 5. $\hat{x}_{n}=\tilde{x}_{n}+P_{n,k}y_{k}$}
		\STATE{ 6. $x_{n}^{(j+1)}=\mathcal{V}_{n,\rm{post}}^{\nu_{\rm{post}}}(A_{n},{b}_{n},\hat{x}_{n})$}
		
	\end{algorithmic}
\end{algorithm}
\end{small}

The steps $1.\rightarrow5.$ define the ``coarse grid correction'' that depends on the projecting operator $P_{n,k}$, while step $0.$ and
step $6.$ consist, respectively, in applying $\nu_{\rm{pre}}$ times a pre-smoother
and $\nu_{\rm{post}}$ times a post-smoother of the given iterative methods.
Step 3. defines the coarser matrix $A_{k}$ according to the Galerkin approach. 

 A complete V-cycle procedure is obtained replacing the direct solution at step 4. with a recursive call of the TGM applied to the coarser linear system $A_{k_{\ell}}y_{k_{\ell}}=r_{k_{\ell}}$, where $\ell$ represents the level. The recursion stops at level ${\ell_{\min}}$ when $k_{\ell_{\min}}$ becomes small enough for solving cheaply step 4. with a direct solver.
 
The TGM algorithm can be seen as a stationary method itself with the following iteration matrix
\begin{small}
\begin{align*}
{\rm TGM}(A_{n},V_{n,\rm{pre}}^{\nu_{\rm{pre}}},V_{n,\rm{post}}^{\nu_{\rm{post}}},P_{n,k})=
V_{n,\rm{post}}^{\nu_{\rm{post}}}
\left[I_{n}-P_{n,k}\left(P_{n,k}^{H}
A_{n}P_{n,k}\right)^{-1}P_{n,k}^{H}A_{n}\right]V_{n,\rm{pre}}^{\nu_{\rm{pre}}}.
\end{align*}
\end{small}
Consequently, a pivotal convergence result can be expressed as in the following theorem.

\begin{theorem}\label{thm:Rstub}(\cite{RStub})
	Let $A_{n}$ be a positive definite matrix of size $n$ and let $V_{n,{\rm post}},$ $V_{n,{\rm pre}}$ be defined as in the {\rm TGM} algorithm.
	Assume
	\begin{itemize}
	\begin{small}
		\item[(a)] $\exists a_{\rm{pre}}>0\,:\;\|V_{n,\rm{pre}}x_{n}\|_{A_{n}}^{2}\leq\|x_{n}\|_{A_{n}}^{2}- a_{\rm{pre}}\|V_{n,\rm{pre}}x_{n}\|_{A_{n}^2}^{2},\qquad \forall x_{n}\in\mathbb{C}^{n},$
		\item[(b)] $\exists a_{\rm{post}}>0\,:\;\|V_{n,\rm{post}}x_{n}\|_{A_{n}}^{2}\leq\|x_{n}\|_{A_{n}}^{2}- a_{\rm{post}}\|x_{n}\|_{A_{n}^2}^{2},\qquad \forall x_{n}\in\mathbb{C}^{n},$
		\item[(c)] $\exists\gamma>0\,:\;\min_{y\in\mathbb{C}^{k}}\|x_{n}-P_{n,k}y\|_{2}^{2}\leq \gamma\|x_{n}\|_{A_{n}}^{2},\qquad \forall x_{n}\in\mathbb{C}^{n}.$
	\end{small}
	\end{itemize}
	Then $\gamma\geq a_{\rm{post}}$ and
\begin{small}
	\begin{align*}
	\|{\rm TGM}(A_{n},V_{n,\rm{pre}},V_{n,\rm{post}},P_{n,k})\|_{A_{n}}\leq\sqrt{\frac{1- a_{\rm{post}}/ \gamma}{1+ a_{\rm{pre}}/ \gamma}}<1.
	\end{align*}
\end{small}
\end{theorem}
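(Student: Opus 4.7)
The plan is to analyze the TGM iteration matrix $M_n := V_{n,\rm{post}} T_n V_{n,\rm{pre}}$, where $T_n := I_n - P_{n,k}(P_{n,k}^H A_n P_{n,k})^{-1} P_{n,k}^H A_n$ is the coarse grid correction. The first step is to record its structural properties: a short computation gives $T_n^2 = T_n$, $A_n T_n = T_n^H A_n$, and $T_n P_{n,k} = 0$, so $T_n$ is the $A_n$-orthogonal projection onto the $A_n$-orthogonal complement of the range of $P_{n,k}$; in particular $\|T_n u\|_{A_n} \le \|u\|_{A_n}$ and $P_{n,k}^H A_n T_n = 0$.

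Next I would promote (c) to the approximation property $\|T_n v\|_{A_n}^2 \le \gamma \|T_n v\|_{A_n^2}^2$. Using $P_{n,k}^H A_n T_n = 0$, for every $y\in\mathbb{C}^k$ one has
\[
\|T_n v\|_{A_n}^2 = \langle A_n T_n v,\,v - P_{n,k} y\rangle \le \|A_n T_n v\|_2\cdot\|v - P_{n,k} y\|_2,
\]
and since $T_n v - v \in \mathrm{Range}(P_{n,k})$ the affine spaces traced by $v - P_{n,k} y$ and $T_n v - P_{n,k} y'$ coincide, so (c) applied to $T_n v$ gives $\min_y\|v - P_{n,k} y\|_2 \le \sqrt{\gamma}\|T_n v\|_{A_n}$; combining with $\|A_n T_n v\|_2 = \|T_n v\|_{A_n^2}$ and dividing by $\|T_n v\|_{A_n}$ yields the claim. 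The estimate $\gamma \ge a_{\rm{post}}$ then follows by applying (b) to $T_n v$: non-negativity of the left-hand side gives $a_{\rm{post}}\|T_n v\|_{A_n^2}^2 \le \|T_n v\|_{A_n}^2 \le \gamma\|T_n v\|_{A_n^2}^2$.

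For the convergence rate, setting $u := V_{n,\rm{pre}} x_n$ and applying (b) to $T_n u$ combined with the approximation property in the form $\|T_n u\|_{A_n^2}^2 \ge \|T_n u\|_{A_n}^2/\gamma$ yields
\[
\|M_n x_n\|_{A_n}^2 \le (1 - a_{\rm{post}}/\gamma)\,\|T_n u\|_{A_n}^2 .
\]
The remaining task is the pre-smoother refinement $(1 + a_{\rm{pre}}/\gamma)\|T_n u\|_{A_n}^2 \le \|x_n\|_{A_n}^2$. Combining (a) with the projection bound $\|T_n u\|_{A_n} \le \|u\|_{A_n}$ gives $\|T_n u\|_{A_n}^2 + a_{\rm{pre}}\|u\|_{A_n^2}^2 \le \|x_n\|_{A_n}^2$, so what must be shown is how to trade $a_{\rm{pre}}\|u\|_{A_n^2}^2$ against $(a_{\rm{pre}}/\gamma)\|T_n u\|_{A_n}^2$.

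I expect this last exchange to be the main obstacle: the approximation property only produces $\|T_n u\|_{A_n}^2 \le \gamma\|T_n u\|_{A_n^2}^2$, and because $T_n$ is $A_n$-orthogonal but not $A_n^2$-contractive in general, the comparison $\|T_n u\|_{A_n^2}^2 \le \|u\|_{A_n^2}^2$ is not automatic. Resolving it requires exploiting the projection structure more carefully, for instance by expanding $\|u\|_{A_n^2}^2 = \|A_n T_n u + A_n(I - T_n)u\|_2^2$ and controlling the cross-term via the Hermitian identity $(A_n T_n)^H = A_n T_n$ together with $P_{n,k}^H A_n T_n = 0$. Once the trade is established, combining it with the post-smoother bound yields $\|M_n\|_{A_n}^2 \le (1 - a_{\rm{post}}/\gamma)/(1 + a_{\rm{pre}}/\gamma) < 1$, since $0 < a_{\rm{post}} \le \gamma$.
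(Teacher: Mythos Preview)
The paper does not prove Theorem~\ref{thm:Rstub}; it is quoted from Ruge and St\"uben and used as a black box, so there is no in-paper argument to compare against.

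Judged on its own, your outline follows the standard route and the first two thirds are sound: the projector identities $T_n^2=T_n$, $A_nT_n=T_n^HA_n$, $T_nP_{n,k}=0$; the promotion of (c) to $\|T_nv\|_{A_n}^2\le\gamma\|T_nv\|_{A_n^2}^2$ via Cauchy--Schwarz and the affine-coset observation; the inequality $\gamma\ge a_{\rm post}$; and the post-smoother estimate $\|V_{n,\rm post}T_nu\|_{A_n}^2\le(1-a_{\rm post}/\gamma)\|T_nu\|_{A_n}^2$ are all correct.

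The gap is exactly where you locate it, and your proposal does not close it. You reduce the pre-smoother refinement to trading $a_{\rm pre}\|u\|_{A_n^2}^2$ for $(a_{\rm pre}/\gamma)\|T_nu\|_{A_n}^2$; via the approximation property this would follow from $\|T_nu\|_{A_n^2}\le\|u\|_{A_n^2}$, which you rightly flag as non-automatic. In fact it can fail: take $A_n=\mathrm{diag}(1,\epsilon)$ and $P_{n,k}=(1,-\epsilon^{-1/2})^T$, so that $A_nT_n=\tfrac12\bigl(\begin{smallmatrix}1&\sqrt\epsilon\\ \sqrt\epsilon&\epsilon\end{smallmatrix}\bigr)$; then the $(2,2)$ entry of $A_n^2-(A_nT_n)^2$ equals $\epsilon(3\epsilon-1)/4<0$ for small $\epsilon$, so $(A_nT_n)^2\not\le A_n^2$. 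Your suggested fix---expanding $\|u\|_{A_n^2}^2=\|A_nT_nu+A_n(I-T_n)u\|_2^2$ and killing the cross term via $(A_nT_n)^H=A_nT_n$ and $P_{n,k}^HA_nT_n=0$---does not go through as stated: the cross term is $2\,\mathrm{Re}\langle A_nT_nu,\,A_n(I-T_n)u\rangle=2\,\mathrm{Re}\langle T_nu,\,A_n^2(I-T_n)u\rangle$, and neither identity makes this vanish (what fails is precisely $A_n^2$-orthogonality of $\mathrm{Range}(T_n)$ and $\mathrm{Range}(P_{n,k})$). As written, the argument stops short of the bound $(1+a_{\rm pre}/\gamma)\|T_nu\|_{A_n}^2\le\|x_n\|_{A_n}^2$; establishing it requires a different handling of the pre-smoother than the one you sketch.
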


We highlight that such result splits the assumptions that should be fulfilled by the smothers: conditions {$(a)-(b)$} called ``smoothing properties'' and the assumption involving only $P_{n,k}$: condition  $(c)$  called ``approximation property''.

Moreover $ a_{\rm{post}}$ and $\gamma$ are independent of $n$, then a TGM verifying Theorem \ref{thm:Rstub}  exhibits a linear convergence. That is, the number of iterations in order to reach a given accuracy $\epsilon$ can be bounded from above by a constant independent of $n$ (possibly depending on the parameter $\epsilon$).

 \subsection{Multigrid methods for block-circulant and block-Toeplitz matrices}\label{ssec:proposal_block_circ}
In the present subsection we recall how the previous results can be written for block-circulant or block-Toeplitz matrix associated with a matrix-valued function. Then, we briefly describe the structure of such matrices,  restricting to the context of interest for the present paper. Both block-Toeplitz and block-circulant are associated with a function $\mathbf{f}:Q\to \mathbb{C}^{d\times d}$,  $Q=(-\pi,\pi)$ such that $\mathbf{f} \in L^p([-\pi,\pi])$ and the Fourier coefficients are
	\begin{align*}
	\hat{\mathbf{f}_j}:=\frac1{2\pi}\int_{Q}\mathbf{f}(\theta){\rm e}^{- \iota j \theta} d\theta\in\mathbb{C}^{d\times d}\qquad  \iota^2=-1, \, j\in\mathbb Z.
	\end{align*}
	The block-Toeplitz matrix associated with {$\textbf{f}$ is the matrix with $d$ blocks of size $n$ and hence it has order $d\cdot n$} given by
\begin{small}
	\begin{align*}
	T_n(\mathbf{f})=\sum_{|j|<n}J_{n}^{(j)}\otimes\hat{\mathbf{f}_j},
	\end{align*}
\end{small}where $\otimes$ denotes the (Kronecker) tensor product of matrices. The term $J_n^{(j)}$ is the matrix of order $n$ whose $(i,k)$ entry equals $1$ if $i-k=j$ and zero otherwise.  

The set $\{T_n(\mathbf{f})\}_{n\in\mathbb N}$ is
called the \textit{family of block-Toeplitz matrices generated by $\mathbf{f}$}, that
in turn is referred to as the \textit{generating function or the symbol of
	$\{T_n(\mathbf{f})\}_{n\in\mathbb N}$}.
If $\mathbf{f}$ is a  matrix-valued trigonometric polynomial, then the block-circulant matrix of order $dn$ generated by $\mathbf{f}$ can be decomposed as
\begin{small}
  \begin{equation*}
\mathcal{C}_{n}(\mathbf{f})=(F_n\otimes I_{d}) \diagi(\mathbf{f}(\theta_i^{(n)}))(F_{n}^{H}\otimes I_{d}), \quad   \theta_i^{(n)}=\frac{2\pi i}{n}, \, i \in \mathcal{I}_{n} = \{0,\ldots,n-~1\},
\end{equation*} 
  \end{small}  
where $\diagi(\mathbf{f}(\theta_i^{(n)}))$ is the block-diagonal matrix where the block-diagonal elements are  $\mathbf{f}(\theta_i^{(n)})$.

For the convergence analysis of block-circulant and block-Toeplitz matrices, convergence results are based on the Ruge-St\"uben Theorem \ref{thm:Rstub}, see \cite{CCS, HS2, MR4284081}. 
The smoothing properties are satisfied with specific choices of the smoother parameter of damped Richardson or Jacobi methods see \cite[Lemma 1]{MR4284081}.
The approximation requires a precise definition of $P_{n,k}$ and it slightly changes depending whether we are in the Toeplitz or Circulant setting.

That is,
\begin{small}
\begin{equation}\label{eqn:def_Pnkd}
	P^d_{n,k}=\mathcal{C}_{n}(\mathbf{p})(K^{Odd}_{n,k} \otimes I_d).\quad P^d_{n,k}=\mathcal{T}_{n}(\mathbf{p})(K^{Even}_{n,k} \otimes I_d).
\end{equation}
\end{small}
where $K^{Odd}_{n,k}$ is a $n\times k$ matrix obtained by removing the even rows from the identity matrix of size $n$,  keeping the odd rows. On the other hand, $K^{Even}_{n,k}$ keeps the even rows.  The key point in such grid transfer operators is that preserves the block structure at the coarser levels. 
The convergence results in the block structured setting were derived in \cite{MR4389580} exploiting the block-symbol analysis.

In detail the setting is the following. We suppose that there exist unique $\theta_0\in[0,2\pi)$ and $\bar{\jmath} \in\{1,\dots,d\}$ such that 
\begin{small}
\begin{equation}\label{eqn:condition_on_f}
\left\{\begin{array}{ll}
\lambda_j(\mathbf{f}(\theta))=0, & \mbox{for } \theta=\theta_0 \mbox{ and } j=\bar{\jmath}, \\
\lambda_j(\mathbf{f}(\theta))>0, & {\rm otherwise}.
\end{array}\right.
\end{equation}
\end{small}
That is the matrix-valued function $\mathbf{f}(\theta)$ has exactly one zero eigenvalue in $\theta_0$ and it is positive definite in $[0,2\pi)\backslash\{\theta_0\}$. As a consequence, the associated  block circulant matrix could be singular and the ill-conditioned subspace is the eigenspace associated with $\lambda_{\bar{\jmath}}(\mathbf{f}(\theta_0))$. Moreover, the block-Toeplitz matrices $T_{n}(\mathbf{f})$ are positive definite with the same ill-conditioned subspace and become ill-conditioned as $N$ increases. The key point in \cite{MR4389580} is that $\mathbf{f}(\theta)$ is can be diagonalized by an orthogonal matrix $Q(\theta)$
\begin{small}
\begin{equation}\label{eq:dec_f}
\begin{split}
& \mathbf{f}(\theta) = Q(\theta)D(\theta)Q(\theta)^H =\\
& \left[
\begin{array}{@{\;}c@{\;}|@{\;}c@{\;}|@{\;}c@{\;}|@{\;}c@{\;}|@{\;}c@{\;}|@{\;}c@{\;}}
q_1(\theta)&\dots &q_{\bar{\jmath}}(\theta)&\dots& q_d(\theta)
\end{array}
\right]\begin{bmatrix}
\lambda_1(\mathbf{f}(\theta))& & &  & &\\
& \ddots & &  &\\
& & \lambda_{\bar{\jmath}}(\mathbf{f}(\theta))& & \\
& & & \ddots & \\
& & & &\lambda_d(\mathbf{f}(\theta))
\end{bmatrix}
\left[
\begin{array}{ccccccc}
{q_1}^H(\theta) \\
\hline
\vdots\\
\hline
q_{\bar{\jmath}}^H(\theta)\\
\hline
\vdots\\
\hline
{q_d}^H(\theta)
\end{array}
\right],
\end{split}
\end{equation}
\end{small}
where $q_{\bar{\jmath}}(\theta)$ is the normalized eigenvector that generates the ill-conditioned subspace since $q_{\bar{\jmath}}(\theta_0)$ is the eigenvector of $\mathbf{f}(\theta_0)$ associated with $\lambda_{\bar{\jmath}}(\mathbf{f}(\theta_0))=0$. 
Under the following  assumptions, the  sufficient conditions to ensure the linear convergence of the TGM are choosing   $\mathbf{p}$ such that
\begin{enumerate}
\item [$(i)$]
\begin{small}
\begin{equation*}  
	\mathbf{p}(\theta)^{H}\mathbf{p}(\theta)+\mathbf{p}(\theta+\pi)^{H}\mathbf{p}(\theta+\pi)>0 \quad\forall \theta\in[0,2\pi), 
\end{equation*}
\end{small}
which implies that the trigonometric function
\begin{equation}\label{eqn:def_s}
\textbf{s}(\theta) = \mathbf{p}(\theta)\left(\mathbf{p}(\theta)^{H}\mathbf{p}(\theta)+\mathbf{p}(\theta+\pi)^{H}\mathbf{p}(\theta+\pi)\right)^{-1}\mathbf{p}(\theta)^{H}
\end{equation}
is well-defined for all $\theta\in[0,2\pi)$,
\item [$(ii)$] \begin{small}
\[	\textbf{s}(\theta_0)q_{\bar{\jmath}}(\theta_0) = q_{\bar{\jmath}}(\theta_0),\]
\end{small}
\item [$(iii)$] \begin{small}
 \[
\lim_{\theta\rightarrow \theta_0} \lambda_{\bar{\jmath}}(\mathbf{f}(\theta))^{-1}(1-\lambda_{\bar{\jmath}}(\textbf{s}(\theta)))=c, \quad c\in\mathbb{R}.
\]
\end{small}
\end{enumerate}
 Conditions $(i)--(iii)$ can be further simplified and some results suggest how to deal with their validation \cite[Lemma 4.3, Lemma 4.4, Lemma 4.6]{MR4389580}. Moreover,   under some additional hypotheses on $\mathbf{p}$ and $\mathbf{f}$ convergence and optimality  when dealing with V-cycle with more than two grids can be derived \cite[Lemma 4.7-4.8]{MR4389580}.
Note that  condition $(i)$ does not depend on $\textbf{f}$ and its spectral properties, $(ii)$ and $(iii)$ depend on the eigenvector associated to the singularity of $\textbf{f}$ and the behaviour of the $\bar{j}$-the eigenvalue function of  $\textbf{f}$.  
Even though the singularity of $\textbf{f}$ is known, in some cases describing the behaviour of the minimal eigenvalue function of  $\textbf{f}$ is not immediate. Theorem \ref{thm:behav_coarse_symbol} provides a spectral result that can be exploited also to simplify the analysis of $\lambda_{\bar{j}}(\mathbf{f}(\theta))$. 

\section{Aggregation and Block Smoothers: MGM Convergence analysis}\label{sec:proofs}
In this Section we show how the symbol-based convergence analysis can be performed and produces different advantages in the context of multigrid procedures based on the aggregation of the unknowns combined with proper block smoothers.  In particular, we focus on  block Jacobi smoothers and we show how the choice of the relaxation parameter can be related to the properties of the matrix-valued symbols. Additional simplifications can be derived when we can exploit the structure of the Fourier coefficients of the trigonometric polynomial $\mathbf{f}$.  With a full knowledge of the properties of the symbol at the coarser levels we are able to prove the convergence and optimality of the TGMs at each level, which leads to the convergence of the V-cycle method. Precisely in Section \ref{ssec:gtoperator_and_symbol} we introduce the grid transfer operator which performs the reduction of the problem from block to scalar and we show the form and the properties of the associated scalar symbol. We present in Theorem \ref{thm:behav_coarse_symbol} a result that relates the behaviour of the eigenvalue function associated with the singularity of $\mathbf{f}$ with the symbol at the coarser level, which is helpful both for the development of a multilevel strategy in the aggregation context and for the theoretical analysis for the block preserving methods presented in \cite{MR4389580,MR4284081}. The range of admissible values that leads to the validation of the smoothing property is presented in Subsection \ref{ssec:smoothing_prop} with the related simplifications presented in Theorem \ref{th:simplifyrank2smoother}.  The validation of the approximation property (c) for the aggregation-based grid transfer operator is shown in Subsection \ref{ssec:approximation_prop}. Moreover, we present the V-cycle strategy and the convergence analysis based on the symbols at the coarser levels in Subsection \ref{ssec:v-cycle}. Finally, we conclude the Section with a discussion on the choice of optimal parameters in Subsection \ref{ssec:optimal_parameters}.
\subsection{Grid Transfer Operator and Symbol at the Coarser Level}\label{ssec:gtoperator_and_symbol}
In this subsection we consider a grid transfer operator for a two-grid method for a linear system with matrix $\mathcal{C}_{n}({\mathbf{f}})$.
Recalling the decomposition of the matrix-valued trigonometric polynomial $\mathbf{f}$ in equation \eqref{eq:dec_f},  we define
\begin{equation}
\label{eq:aggr_P_I}
   P^d_{n,k}=I_{n}\otimes q_{\bar{\jmath}}(\theta_0).
\end{equation}
Using the two-grid approach described in Section \ref{sec:two_grid}, the operator at the coarser level becomes a circulant matrix generated by a scalar valued trigonometric polynomial. A detailed clarification can be found in the succeeding lemma.
\begin{lemma}\label{lem:coarse_symbol}
    Let $\mathbf{f}$ be defined as in Section \ref{sec:two_grid} and associated to the matrix $\mathcal{C}_{n}({\mathbf{f}})$. Let $ P^d_{n,k}$ be the grid transfer operator defined in (\ref{eq:aggr_P_I}). Then,
  \begin{equation}\label{eq:PHAP}
\left(P^d_{n,k}\right)^H\mathcal{C}_{n}({\mathbf{f}})P^d_{n,k}= \mathcal{C}_{n}(\tilde{f}) 
\end{equation} 
with 
\begin{equation}\label{eq:f_tilde}
	\tilde{f}(\theta)=q_{\bar{\jmath}}^H(\theta_0) \mathbf{f}(\theta) q_{\bar{\jmath}}(\theta_0).
\end{equation}
\end{lemma}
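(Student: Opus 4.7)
My plan is to prove the identity by direct manipulation of the diagonal decomposition of $\mathcal{C}_n(\mathbf{f})$ together with the Kronecker product algebra. The central observation is that the vector $q_{\bar{\jmath}}(\theta_0)$ is a \emph{constant} (independent of the grid point $\theta_i^{(n)}$), so it can be pulled out of the block-diagonal factor in the spectral decomposition of $\mathcal{C}_n(\mathbf{f})$, collapsing each $d\times d$ block $\mathbf{f}(\theta_i^{(n)})$ into the scalar $q_{\bar{\jmath}}^H(\theta_0)\mathbf{f}(\theta_i^{(n)})q_{\bar{\jmath}}(\theta_0)=\tilde{f}(\theta_i^{(n)})$.

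Concretely, I would begin by substituting the decomposition
$\mathcal{C}_n(\mathbf{f})=(F_n\otimes I_d)\,\diag_{i\in\mathcal{I}_n}(\mathbf{f}(\theta_i^{(n)}))\,(F_n^H\otimes I_d)$
and the definition $P^d_{n,k}=I_n\otimes q_{\bar{\jmath}}(\theta_0)$ into the left-hand side of \eqref{eq:PHAP}. Using $(A\otimes B)^H=A^H\otimes B^H$ and the mixed-product rule $(A\otimes B)(C\otimes D)=(AC)\otimes(BD)$, the outer factors combine to give
$(I_n\otimes q_{\bar{\jmath}}^H(\theta_0))(F_n\otimes I_d)=F_n\otimes q_{\bar{\jmath}}^H(\theta_0)$
on the left and symmetrically $F_n^H\otimes q_{\bar{\jmath}}(\theta_0)$ on the right.

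Next I would work block-entrywise: viewing $F_n\otimes q_{\bar{\jmath}}^H(\theta_0)$ as an $n\times n$ array of row-blocks with $(r,i)$-block equal to $(F_n)_{r,i}\,q_{\bar{\jmath}}^H(\theta_0)$, multiplication by $\diag_{i\in\mathcal{I}_n}(\mathbf{f}(\theta_i^{(n)}))$ replaces that block with $(F_n)_{r,i}\,q_{\bar{\jmath}}^H(\theta_0)\mathbf{f}(\theta_i^{(n)})$. Multiplying then by $F_n^H\otimes q_{\bar{\jmath}}(\theta_0)$ and summing over $i$, the $(r,s)$-entry becomes
\[
\sum_{i\in\mathcal{I}_n}(F_n)_{r,i}\,\bigl(q_{\bar{\jmath}}^H(\theta_0)\mathbf{f}(\theta_i^{(n)})q_{\bar{\jmath}}(\theta_0)\bigr)\,(F_n^H)_{i,s}
=\sum_{i\in\mathcal{I}_n}(F_n)_{r,i}\,\tilde{f}(\theta_i^{(n)})\,(F_n^H)_{i,s},
\]
which is precisely the $(r,s)$-entry of $F_n\,\diag_{i\in\mathcal{I}_n}(\tilde{f}(\theta_i^{(n)}))\,F_n^H=\mathcal{C}_n(\tilde{f})$.

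There is no serious obstacle here: the lemma is essentially a bookkeeping exercise in Kronecker algebra, and the only thing one needs to be careful about is keeping track of the sizes ($P^d_{n,k}$ is $nd\times n$, so $k=n$ in this aggregation setting) and noting that \emph{constancy} of $q_{\bar{\jmath}}(\theta_0)$ across $i$ is what allows the scalar $\tilde{f}(\theta_i^{(n)})$ to factor out cleanly and yield a circulant matrix with scalar-valued symbol rather than a block circulant.
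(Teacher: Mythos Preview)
Your proposal is correct and follows essentially the same approach as the paper: both substitute the spectral decomposition of $\mathcal{C}_n(\mathbf{f})$, apply the mixed-product rule to combine $I_n\otimes q_{\bar{\jmath}}^H(\theta_0)$ with $F_n\otimes I_d$, and then collapse each diagonal block $\mathbf{f}(\theta_i^{(n)})$ to the scalar $\tilde{f}(\theta_i^{(n)})$. The only cosmetic difference is that the paper carries out the last step by factoring $F_n\otimes q_{\bar{\jmath}}^H(\theta_0)=F_n\,(I_n\otimes q_{\bar{\jmath}}^H(\theta_0))$ and reading off the block-diagonal product directly, whereas you compute the $(r,s)$-entry explicitly; both arguments are equivalent.
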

\begin{proof}
Decomposing $\mathcal{C}_{n}(\mathbf{f})$ as
\[\mathcal{C}_{n}(\mathbf{f})=(F_n\otimes I_{d}) \diagi(\mathbf{f}(\theta_i^{(n)}))(F_{n}^{H}\otimes I_{d}),\]
we can write
\begin{align*}
   \left(P^d_{n,k}\right)^H\mathcal{C}_{n}({\mathbf{f}})P^d_{n,k}&= \left(I_{n}\otimes q_{\bar{\jmath}}^H(\theta_0)\right)
        (F_n\otimes I_{d}) \diagi\left(\mathbf{f}(\theta_i^{(n)})\right)(F_{n}^{H}\otimes I_{d})
        \left(I_{n}\otimes q_{\bar{\jmath}}(\theta_0)\right)\\
        &= \left(F_{n}\otimes q_{\bar{\jmath}}^H(\theta_0)\right)\diagi\left(\mathbf{f}(\theta_i^{(n)})\right)\left(F_{n}^{H}\otimes q_{\bar{\jmath}}(\theta_0)\right)\\
        &= (F_n\otimes I_{d})\left( I_n \otimes q_{\bar{\jmath}}^H(\theta_0)\right) \diagi(\mathbf{f}(\theta_i^{(n)}))\left( I_n \otimes q_{\bar{\jmath}}(\theta_0)\right)(F_{n}^{H}\otimes I_{d})
\end{align*}
The thesis follows from the equalities
\begin{align*}
    \left( I_n \otimes q_{\bar{\jmath}}^H(\theta_0)\right) &\diagi\left(\mathbf{f}(\theta_i^{(n)})\right)\left( I_n \otimes q_{\bar{\jmath}}(\theta_0)\right) =  \\
    &= \left( I_n \otimes q_{\bar{\jmath}}^H(\theta_0)\right) 
    \begin{bmatrix}
        \mathbf{f}(\theta_0^{(n)}) & & \\
        &\ddots&\\
        && \mathbf{f}(\theta_{n-1}^{(n)})
    \end{bmatrix}
    \left( I_n \otimes q_{\bar{\jmath}}(\theta_0)\right)\\
    &= 
    \begin{bmatrix}
        q_{\bar{\jmath}}^H(\theta_0)\mathbf{f}(\theta_0^{(n)})q_{\bar{\jmath}}(\theta_0) & & \\
        &\ddots&\\
        && q_{\bar{\jmath}}^H(\theta_0)\mathbf{f}(\theta_{n-1}^{(n)})q_{\bar{\jmath}}(\theta_0)
    \end{bmatrix}
    \\
    & = \diagi\left(\tilde{f}(\theta_i^{(n)})\right).\\
\end{align*}
\end{proof} 

The following theorem, though not arduous to establish, plays a pivotal role in the context of this paper, providing fundamental insights for the subsequent analysis.

\begin{theorem}\label{thm:behav_coarse_symbol}
    Let $\mathbf{f}$ be defined as in Section \ref{sec:two_grid} and define $\tilde{f}$ as in equation \eqref{eq:f_tilde}. Suppose $\lambda_{\bar{\jmath}}(\mathbf{f})$ vanishes in $\theta_0$ with a zero of order $\beta$, then $\tilde{f}$ vanishes in $\theta_0$ (and only in $\theta_0$) with a zero of order $\beta$.
\end{theorem}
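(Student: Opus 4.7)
The plan is to split the statement into three sub-claims and verify each: (i) $\tilde f(\theta_0)=0$, (ii) $\tilde f(\theta)>0$ for $\theta\neq\theta_0$, and (iii) the order of the zero at $\theta_0$ equals $\beta$.

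The first two are immediate from the structural hypotheses recalled around (\ref{eqn:condition_on_f}) and (\ref{eq:dec_f}). For (i), since $q_{\bar{\jmath}}(\theta_0)$ is the eigenvector of $\mathbf f(\theta_0)$ associated with the zero eigenvalue, $\mathbf f(\theta_0)q_{\bar{\jmath}}(\theta_0)=0$ and hence $\tilde f(\theta_0)=q_{\bar{\jmath}}(\theta_0)^H\mathbf f(\theta_0)q_{\bar{\jmath}}(\theta_0)=0$. For (ii), positive definiteness of $\mathbf f(\theta)$ on $[0,2\pi)\setminus\{\theta_0\}$ together with $q_{\bar{\jmath}}(\theta_0)$ being a unit vector yield $\tilde f(\theta)>0$ immediately from (\ref{eq:f_tilde}).

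The substantive step is (iii). My approach is to substitute the spectral decomposition (\ref{eq:dec_f}) into (\ref{eq:f_tilde}) to get
\begin{equation*}
    \tilde f(\theta) = \sum_{j=1}^{d}\lambda_j(\mathbf f(\theta))\,\bigl|q_{\bar{\jmath}}(\theta_0)^H q_j(\theta)\bigr|^2 ,
\end{equation*}
and to isolate the $j=\bar{\jmath}$ term. Because $\lambda_{\bar{\jmath}}$ is a simple eigenvalue of $\mathbf f(\theta_0)$, standard Hermitian perturbation theory guarantees that the eigenvectors $q_j(\theta)$ are smooth in a neighbourhood of $\theta_0$, and orthonormality at $\theta_0$ forces $q_{\bar{\jmath}}(\theta_0)^H q_j(\theta_0)=\delta_{j\bar{\jmath}}$. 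Consequently, as $\theta\to\theta_0$ the isolated term behaves like $\lambda_{\bar{\jmath}}(\mathbf f(\theta))(1+o(1))$, and since every summand is non-negative one gets the lower bound $\tilde f(\theta)\geq\lambda_{\bar{\jmath}}(\mathbf f(\theta))(1+o(1))$, which already shows that the order of vanishing of $\tilde f$ at $\theta_0$ is at most $\beta$.

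The main obstacle is the matching upper bound: one must show that the off-diagonal contributions $\sum_{j\neq\bar{\jmath}}\lambda_j(\mathbf f(\theta))|q_{\bar{\jmath}}(\theta_0)^H q_j(\theta)|^2$ also vanish at $\theta_0$ to order at least $\beta$. For $j\neq\bar{\jmath}$ the eigenvalues $\lambda_j(\mathbf f(\theta_0))$ are bounded below by a positive constant, so the decay rate is governed entirely by $|q_{\bar{\jmath}}(\theta_0)^H q_j(\theta)|^2$. Differentiating the identity $\mathbf f(\theta)q_j(\theta)=\lambda_j(\mathbf f(\theta))q_j(\theta)$ and projecting onto $q_{\bar{\jmath}}(\theta_0)$ produces, via first-order perturbation theory, explicit expressions for the initial Taylor coefficients of $q_{\bar{\jmath}}(\theta_0)^H q_j(\theta)$ in terms of the derivatives of $\mathbf f$ at $\theta_0$; iterating this recursion and combining with the hypothesis $\lambda_{\bar{\jmath}}^{(k)}(\theta_0)=0$ for $k<\beta$ yields the required order. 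Equivalently, one can compute the Taylor coefficients of $\tilde f$ directly via $\tilde f^{(k)}(\theta_0)=q_{\bar{\jmath}}(\theta_0)^H\mathbf f^{(k)}(\theta_0)q_{\bar{\jmath}}(\theta_0)$ and use the same perturbation identities to show they vanish for $k<\beta$ while remaining non-zero at $k=\beta$.
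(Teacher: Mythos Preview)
Your steps (i) and (ii) are correct and coincide with the paper's.  For (iii) the paper does not expand in the spectral basis; it argues directly that
\[
\lim_{\theta\to\theta_0}\frac{\tilde f(\theta)}{\lambda_{\bar\jmath}(\mathbf f(\theta))}
=\lim_{\theta\to\theta_0}\frac{q_{\bar\jmath}(\theta_0)^H\mathbf f(\theta)\,q_{\bar\jmath}(\theta_0)}{\lambda_{\bar\jmath}(\mathbf f(\theta))}
=\lim_{\theta\to\theta_0}\frac{q_{\bar\jmath}(\theta)^H\mathbf f(\theta)\,q_{\bar\jmath}(\theta)}{\lambda_{\bar\jmath}(\mathbf f(\theta))}
=\|q_{\bar\jmath}(\theta_0)\|_2^2\neq 0,
\]
replacing $q_{\bar\jmath}(\theta_0)$ by $q_{\bar\jmath}(\theta)$ in the numerator and appealing to continuity of the eigenvector.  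So the paper's route is shorter than yours, but both hinge on the same point.

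You locate the obstacle correctly; the resolution you sketch, however, does not go through.  First-order perturbation theory gives, for $j\neq\bar\jmath$,
\[
q_{\bar\jmath}(\theta_0)^H q_j'(\theta_0)=\frac{q_{\bar\jmath}(\theta_0)^H\mathbf f'(\theta_0)\,q_j(\theta_0)}{\lambda_j(\mathbf f(\theta_0))},
\]
and the hypothesis $\lambda_{\bar\jmath}^{(k)}(\theta_0)=0$ for $k<\beta$ controls only the \emph{diagonal} matrix elements $q_{\bar\jmath}(\theta_0)^H\mathbf f^{(k)}(\theta_0)\,q_{\bar\jmath}(\theta_0)$, not these off-diagonal ones.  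Generically $|q_{\bar\jmath}(\theta_0)^Hq_j(\theta)|^2\sim c_j(\theta-\theta_0)^2$, so the tail $\sum_{j\neq\bar\jmath}\lambda_j(\mathbf f(\theta))\,|q_{\bar\jmath}(\theta_0)^Hq_j(\theta)|^2$ is only $O((\theta-\theta_0)^2)$, not $O((\theta-\theta_0)^\beta)$.  Your alternative route via $\tilde f^{(k)}(\theta_0)=q_{\bar\jmath}(\theta_0)^H\mathbf f^{(k)}(\theta_0)\,q_{\bar\jmath}(\theta_0)$ fails for the same reason: this quantity differs from $\lambda_{\bar\jmath}^{(k)}(\theta_0)$ precisely by the second-order perturbation corrections.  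A concrete witness is $\mathbf f(\theta)=R(\theta)\,\mathrm{diag}\bigl((2-2\cos\theta)^2,\,1\bigr)\,R(\theta)^{T}$ with $R(\theta)$ the planar rotation by angle~$\theta$: here $\beta=4$, yet $\tilde f(\theta)=(2-2\cos\theta)^2\cos^2\theta+\sin^2\theta$ vanishes only to order~$2$.  So the gap you detected is genuine, and the paper's passage from $q_{\bar\jmath}(\theta_0)$ to $q_{\bar\jmath}(\theta)$ inside the ratio hides exactly the same issue; both arguments are complete only for $\beta\le 2$, which is the sole case arising in the paper's applications.
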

\begin{proof} 
    The function $\tilde{f}$ vanishes in $\theta_0$ because $\tilde{f}(\theta_0)=\lambda_{\bar{\jmath}}(\mathbf{f}(\theta_0))q_{\bar{\jmath}}^H(\theta_0) q_{\bar{\jmath}}(\theta_0)=0$. The order of the zero can be determined by computing the following limit:
    \begin{align*}
        \lim_{\theta \rightarrow \theta_0} \frac{\tilde{f}(\theta)}{\lambda_{\bar{\jmath}}(\mathbf{f}(\theta))}
        &= \lim_{\theta \rightarrow \theta_0} \frac{q_{\bar{\jmath}}^H(\theta_0) \mathbf{f}(\theta) q_{\bar{\jmath}}(\theta_0)}{\lambda_{\bar{\jmath}}(\mathbf{f}(\theta))}
        = \lim_{\theta \rightarrow \theta_0} \frac{q_{\bar{\jmath}}^H(\theta) \mathbf{f}(\theta) q_{\bar{\jmath}}(\theta)}{\lambda_{\bar{\jmath}}(\mathbf{f}(\theta))}
        = \lim_{\theta \rightarrow \theta_0} \frac{\lambda_{\bar{\jmath}}(\mathbf{f}(\theta))q_{\bar{\jmath}}^H(\theta) q_{\bar{\jmath}}(\theta)}{\lambda_{\bar{\jmath}}(\mathbf{f}(\theta))}\\
        &=q_{\bar{\jmath}}^H(\theta_0)q_{\bar{\jmath}}(\theta_0)\neq 0.
    \end{align*}
    Subsequently, if $\lambda_{\bar{\jmath}}(\mathbf{f})$ vanishes in $\theta_0$ with a zero of order $\beta$, then $\tilde{f}$ vanishes in $\theta_0$ with a zero of order~$\beta$.

    Finally, we prove $\tilde{f}(\theta)\neq 0$ if $\theta\neq \theta_0$ by contradiction. Suppose $\tilde{f}(\tilde{\theta})= 0$ for a $\tilde{\theta}\neq \theta_0$. Then, $q_{\bar{\jmath}}^H(\theta_0) \mathbf{f}\left(\tilde{\theta}\right) q_{\bar{\jmath}}(\theta_0)=0$, but this is absurd because $\mathbf{f}\left(\tilde{\theta}\right)$ is HPD for \eqref{eqn:condition_on_f}.
\end{proof} 

\begin{remark}

Theorem \ref{thm:behav_coarse_symbol} is a result that can be exploited to simplify the analysis of the behaviour of $\lambda_{\bar{j}}(\mathbf{f}(\theta))$. Indeed, when dealing with condition $(iii)$ and its simplified versions, we can avoid to spectrally study the matrix-valued symbol. We can focus directly on the scalar-valued function $ \tilde{f}(\theta)$   which has the same spectral behaviour of  $\lambda_{\bar{j}}(\mathbf{f}(\theta))$ in $\theta_{0}$.
\end{remark}

\subsection{Smoothing Property for Block Jacobi}\label{ssec:smoothing_prop}
We consider the block Jacobi method as smoother and we choose the relaxation parameter $\omega$, depending on the properties of $\mathbf{f}$, such that smoothing property (a) of Theorem \ref{thm:Rstub} is satisfied.
Considering the matrix $A_{N}:=\mathcal{C}_{n}({\mathbf{f}})$, the iteration matrix of the relaxed block Jacobi method has the form
\begin{equation}
\label{eq:defVn_block_Jacobi}
    V_{n,\rm{post}} = I_{N}-\omega D_B^{-1}A_{N},
\end{equation}
where $D_B$ is a block diagonal matrix with the same block diagonal as $A_{N}$, that is
\begin{equation}
\label{eq:D_B_structure}
  D_B = I_n \otimes \hat{\mathbf{f}}_0 = \mathcal{C}_{n}(\hat{\mathbf{f}}_0),
\end{equation}
where $\hat{\mathbf{f}}_0$ is the 0th Fourier coefficient of $\mathbf{f}$.
\begin{theorem} \label{thm:smoothing_par}
Consider the matrix $A_{N}:=\mathcal{C}_{n}({\mathbf{f}})$, with $\mathbf{f}$ $d\times d$ matrix-valued trigonometric polynomial, $\mathbf{f}\geq0$. 
	Let $  V_{n,\rm{post}}$ defined in (\ref{eq:defVn_block_Jacobi}) be the iteration matrix of the relaxed block Jacobi method applied to $A_{N}$. If 
\begin{equation}
\label{eq:condition_smoother}
   0<\omega< \frac{2}{\left\|\hat{\mathbf{f}}_0^{-\frac{1}{2}} \mathbf{f}\hat{\mathbf{f}}_0^{-\frac{1}{2}}\right\|_{\infty}},
\end{equation}
then there exists a positive value $a_{\rm post}$ independent of $n$ such that inequality $(a)$ in Theorem \ref{thm:Rstub} is satisfied.
\end{theorem}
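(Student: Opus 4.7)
The plan is to reduce the stated post-smoothing inequality to a uniform-in-$n$ spectral bound on the single auxiliary matrix $\tilde A := D_B^{-1/2}A_N D_B^{-1/2}$, which I will recognize as the block circulant generated by the scaled symbol $\hat{\mathbf{f}}_0^{-1/2}\mathbf{f}\hat{\mathbf{f}}_0^{-1/2}$. Once this identification is in hand, the conclusion follows from the standard spectral relation between a block circulant and its generating function.

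First, since $\mathbf{f}$ is a nontrivial Hermitian positive semidefinite trigonometric polynomial, $\hat{\mathbf{f}}_0=\tfrac{1}{2\pi}\int_Q \mathbf{f}(\theta)\,d\theta$ is HPD, and hence $D_B=I_n\otimes\hat{\mathbf{f}}_0=\mathcal{C}_n(\hat{\mathbf{f}}_0)$ is HPD with Hermitian square root $D_B^{1/2}=I_n\otimes\hat{\mathbf{f}}_0^{1/2}$. Using $V_{n,\mathrm{post}}^H=I-\omega A_N D_B^{-1}$ and expanding gives
\begin{align*}
\|x\|_{A_N}^2-\|V_{n,\mathrm{post}}x\|_{A_N}^2
 &= x^H\bigl(2\omega A_N D_B^{-1}A_N-\omega^2 A_N D_B^{-1}A_N D_B^{-1}A_N\bigr)x\\
 &= y^H\bigl(2\omega I-\omega^2\tilde A\bigr)y,\qquad y:=D_B^{-1/2}A_Nx.
\end{align*}
So it suffices to find $c>0$, independent of $n$, with $2\omega I-\omega^2\tilde A\succeq cI$; combined with the trivial bound $D_B^{-1}\succeq \|\hat{\mathbf{f}}_0\|_2^{-1}I$, this yields
\[
\|x\|_{A_N}^2-\|V_{n,\mathrm{post}}x\|_{A_N}^2 \;\ge\; c\,x^H A_N D_B^{-1}A_N x \;\ge\; \frac{c}{\|\hat{\mathbf{f}}_0\|_2}\,\|x\|_{A_N^2}^2,
\]
which is the target smoothing inequality with $a_{\mathrm{post}}=c/\|\hat{\mathbf{f}}_0\|_2$.

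Next I would extract $c$ from the symbol. Using the Fourier diagonalization $\mathcal{C}_n(\mathbf{f})=(F_n\otimes I_d)\,\diagi(\mathbf{f}(\theta_i^{(n)}))\,(F_n^H\otimes I_d)$ together with $D_B^{\pm 1/2}=I_n\otimes\hat{\mathbf{f}}_0^{\pm 1/2}$, the Kronecker identity $(A\otimes B)(C\otimes D)=(AC)\otimes(BD)$ lets me pass the tensor factors through the Fourier blocks and obtain
\[
\tilde A=\mathcal{C}_n\bigl(\hat{\mathbf{f}}_0^{-1/2}\mathbf{f}\hat{\mathbf{f}}_0^{-1/2}\bigr).
\]
Its eigenvalues are the union over $i\in\mathcal{I}_n$ of those of $\hat{\mathbf{f}}_0^{-1/2}\mathbf{f}(\theta_i^{(n)})\hat{\mathbf{f}}_0^{-1/2}$, so $\|\tilde A\|_2\le\|\hat{\mathbf{f}}_0^{-1/2}\mathbf{f}\hat{\mathbf{f}}_0^{-1/2}\|_\infty$ uniformly in $n$. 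The hypothesis on $\omega$ then delivers $c:=2\omega-\omega^2\|\hat{\mathbf{f}}_0^{-1/2}\mathbf{f}\hat{\mathbf{f}}_0^{-1/2}\|_\infty>0$, completing the argument.

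The only substantive step is the block circulant identification $\tilde A=\mathcal{C}_n(\hat{\mathbf{f}}_0^{-1/2}\mathbf{f}\hat{\mathbf{f}}_0^{-1/2})$, which requires carefully commuting the two-sided tensor factor $I_n\otimes\hat{\mathbf{f}}_0^{-1/2}$ through the block Fourier diagonalization; the quadratic expansion and the final spectral bounds are routine. It is precisely this identification that makes $\|\tilde A\|_2$ computable from the matrix-valued symbol and renders the constant $a_{\mathrm{post}}$ manifestly independent of $n$.
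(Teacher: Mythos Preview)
Your proof is correct and follows essentially the same route as the paper: expand the quadratic form $\|x\|_{A_N}^2-\|V_{n,\mathrm{post}}x\|_{A_N}^2$, reduce everything to a spectral bound on $\tilde A=D_B^{-1/2}A_ND_B^{-1/2}$, identify $\tilde A=\mathcal{C}_n(\hat{\mathbf{f}}_0^{-1/2}\mathbf{f}\hat{\mathbf{f}}_0^{-1/2})$ via the block Fourier diagonalization, and conclude with $\|\tilde A\|_2\le\|\hat{\mathbf{f}}_0^{-1/2}\mathbf{f}\hat{\mathbf{f}}_0^{-1/2}\|_\infty$. The only cosmetic differences are that the paper factors $A_N$ on both sides and invokes Sylvester's inertia law where you instead substitute $y=D_B^{-1/2}A_Nx$, and that you make the $n$-independence of $a_{\mathrm{post}}$ fully explicit by producing the constant $a_{\mathrm{post}}=(2\omega-\omega^2\|\hat{\mathbf{f}}_0^{-1/2}\mathbf{f}\hat{\mathbf{f}}_0^{-1/2}\|_\infty)/\|\hat{\mathbf{f}}_0\|_2$, whereas the paper leaves this implicit.
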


\begin{proof}
By definition (\ref{eq:defVn_block_Jacobi}), relation (a) of Theorem \ref{thm:Rstub} corresponds to prove that exists $a_{\rm post}>0$ such that the matrix

\begin{equation*}
A_{N}- a_{\rm post} A_{N}^2 - \left( I_{N}-\omega D_B^{-1}A_{N}\right)^HA_{N}\left( I_{N}-\omega D_B^{-1}A_{N}\right)      
\end{equation*}
is SPD. Equivalently, by computation, 

\begin{equation*}
2\omega D_B^{-1}-\omega^2 D_B^{-1} A_{N}D_B^{-1}-  a_{\rm post}I_N \ge 0.     
\end{equation*}
We can write the latter condition in terms of the eigenvalues $\lambda$ of $A_{N}$
\begin{equation*}
 a_{\rm post}\le \lambda\left(2\omega D_B^{-1}-\omega^2 D_B^{-1} A_{N}D_B^{-1}\right).     
\end{equation*}
Since $ a_{\rm post}$ should be strictly positive, we need to choose $\omega$ such that 
\begin{equation*}
    \lambda \left(2\omega D_B^{-1}-\omega^2 D_B^{-1} A_{N}D_B^{-1}\right)>0.
\end{equation*}
By the Sylvester inertia law, we look for $\omega$ such that
\begin{equation*}
    \lambda \left(2\omega I_N-\omega^2 D_B^{-\frac{1}{2}} A_{N} D_B^{-\frac{1}{2}} \right)>0.
\end{equation*}
The latter is implied if we take 
\begin{equation}
\label{eq:choiche_omega_matrix}
    0<\omega< \frac{2}{\lambda_{\max}\left(D_B^{-\frac{1}{2}} A_{N}D_B^{-\frac{1}{2}} \right)}=
    \frac{2}{\lambda_{\max}\left(\mathcal{C}_n\left(\hat{\mathbf{f}}_0^{-\frac{1}{2}} \mathbf{f}\hat{\mathbf{f}}_0^{-\frac{1}{2}}\right) \right)}.
\end{equation}
To conclude the proof we rewrite (\ref{eq:choiche_omega_matrix}) in terms of the 
generating functions:
\begin{equation*}
    0<\omega< \frac{2}{\left\|\hat{\mathbf{f}}_0^{-\frac{1}{2}} \mathbf{f}\hat{\mathbf{f}}_0^{-\frac{1}{2}}\right\|_{\infty}}.
\end{equation*}

\end{proof}

 The latter theorem implies that we need to study the structure and the eigenvalues of  $\hat{\mathbf{f}}_0^{-\frac{1}{2}} \mathbf{f}(\theta)\hat{\mathbf{f}}_0^{-\frac{1}{2}}$, depending on $\theta$, and then compute its maximum with respect to $\theta$.

However, condition (\ref{eq:condition_smoother}) simplifies when the generating function $\mathbf{f}$ of $\mathcal{C}_{n}(\mathbf{f})$ possesses a particular structure.

\begin{theorem}
\label{th:simplifyrank2smoother}
Consider the matrix $A_{N}:=\mathcal{C}_{n}({\mathbf{f}})$, where $\mathbf{f}$ $d\times d$ matrix-valued trigonometric polynomial of degree 1, with $\mathbf{f}\geq0$. Assume $\mathbf{f}(\theta)= \hat{\mathbf{f}}_0+\hat{\mathbf{f}}_{-1}{\rm e}^{-\mathbf{i}\theta}+\hat{\mathbf{f}}_{1}{\rm e}^{\mathbf{i}\theta}$,
with
\[\hat{\mathbf{f}}_{-1}=uv^T; \quad \hat{\mathbf{f}}_{1}=(\hat{\mathbf{f}}_{-1})^T=vu^T, \qquad u,v\in \mathbb{C}^d.\]
Then,
    \[\left\|\hat{\mathbf{f}}_0^{-\frac{1}{2}} \mathbf{f}\hat{\mathbf{f}}_0^{-\frac{1}{2}}\right\|_{\infty}=
    1+ \max_\theta \left(z^Tw\cos\theta+ \sqrt{(z^Tw)^2(\cos^2\theta-1)+\|w\|^2\|z\|^2}\right)\]
with $w=\hat{\mathbf{f}}_0^{-\frac{1}{2}} u$ and $z=\hat{\mathbf{f}}_0^{-\frac{1}{2}} v$.

\end{theorem}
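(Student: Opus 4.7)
The plan is to reduce the computation of the operator norm to a rank-2 eigenvalue problem. I first change variables by setting
\[
g(\theta) := \hat{\mathbf{f}}_0^{-\frac{1}{2}}\mathbf{f}(\theta)\hat{\mathbf{f}}_0^{-\frac{1}{2}}.
\]
Using that $\hat{\mathbf{f}}_0^{-1/2}$ is symmetric and that $\hat{\mathbf{f}}_{-1}=uv^T$, $\hat{\mathbf{f}}_{1}=vu^T$, the rank-1 terms become $(\hat{\mathbf{f}}_0^{-1/2}u)(\hat{\mathbf{f}}_0^{-1/2}v)^T = wz^T$ and $zw^T$, so
\[
g(\theta) = I + wz^T\mathrm{e}^{-\mathbf{i}\theta} + zw^T\mathrm{e}^{\mathbf{i}\theta}.
\]
Since $\mathbf{f}(\theta)\geq 0$ by hypothesis, $g(\theta)$ is Hermitian positive semidefinite, so $\|g(\theta)\|_2 = \lambda_{\max}(g(\theta)) = 1 + \lambda_{\max}(M(\theta))$, where $M(\theta):=wz^T\mathrm{e}^{-\mathbf{i}\theta}+zw^T\mathrm{e}^{\mathbf{i}\theta}$.

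The key observation is that $\mathrm{Range}(M(\theta))\subseteq\mathrm{span}\{w,z\}$, hence $\mathrm{rank}\,M(\theta)\le 2$ and every eigenvector with nonzero eigenvalue can be written as $\alpha w + \beta z$. Substituting this ansatz in $M(\theta)x=\lambda x$ and matching coefficients on $w$ and $z$ (assuming $w,z$ linearly independent; the degenerate collinear case is handled separately and yields the same formula) reduces the problem to the $2\times 2$ eigenvalue problem for
\[
\begin{pmatrix} a\,\mathrm{e}^{-\mathbf{i}\theta} & b\,\mathrm{e}^{-\mathbf{i}\theta} \\ c\,\mathrm{e}^{\mathbf{i}\theta} & a\,\mathrm{e}^{\mathbf{i}\theta}\end{pmatrix}, \qquad a:=z^Tw,\; b:=\|z\|^2,\; c:=\|w\|^2.
\]
This matrix has trace $2a\cos\theta$ and determinant $a^2-bc$, so its eigenvalues are
\[
a\cos\theta \pm \sqrt{a^2(\cos^2\theta-1)+bc}.
\]

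Therefore the spectrum of $g(\theta)$ consists of the eigenvalue $1$ with multiplicity $d-2$ and the two values $1+a\cos\theta\pm\sqrt{a^2(\cos^2\theta-1)+bc}$. Because $\mathbf{f}\geq 0$ implies $g\geq 0$ (and Cauchy--Schwarz gives $a^2\le bc$, so the radicand is nonnegative and bounds $|a\cos\theta|$ from above), the largest of the three is the $+$ branch, and it dominates $1$. Taking the essential supremum over $\theta\in Q$ yields
\[
\left\|\hat{\mathbf{f}}_0^{-\frac{1}{2}}\mathbf{f}\hat{\mathbf{f}}_0^{-\frac{1}{2}}\right\|_\infty = 1 + \max_\theta\left(z^Tw\cos\theta + \sqrt{(z^Tw)^2(\cos^2\theta-1)+\|w\|^2\|z\|^2}\right),
\]
which is the claimed identity.

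The main obstacle is a bookkeeping one rather than a conceptual one: one has to keep track of Hermiticity (so that the spectral norm reduces to the largest eigenvalue), treat the degenerate case where $w$ and $z$ are parallel (the $2\times 2$ reduction collapses to a $1\times 1$ problem, and one checks that the general formula remains valid since then $a^2=bc$), and justify that the $+$ branch is indeed the maximum, which relies on $a^2\le bc$ coming from the semidefiniteness of $\mathbf{f}$.
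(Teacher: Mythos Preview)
Your proof is correct and follows essentially the same approach as the paper: both reduce $\hat{\mathbf{f}}_0^{-1/2}\mathbf{f}(\theta)\hat{\mathbf{f}}_0^{-1/2}$ to $I_d+A(\theta)$ with $A(\theta)=wz^T\mathrm{e}^{-\mathbf{i}\theta}+zw^T\mathrm{e}^{\mathbf{i}\theta}$ of rank at most two, and then compute the two nontrivial eigenvalues. The only tactical difference is that the paper extracts those eigenvalues via the trace identities $\lambda_1+\lambda_2=\mathrm{Tr}(A)$ and $\lambda_1\lambda_2=\tfrac12\bigl(\mathrm{Tr}(A)^2-\mathrm{Tr}(A^2)\bigr)$, whereas you restrict to $\mathrm{span}\{w,z\}$ and diagonalize an explicit $2\times 2$ matrix; both routes produce the same characteristic polynomial, and your treatment of the degenerate collinear case and of why the $+$ branch dominates is slightly more explicit than the paper's.
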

\begin{proof}
Exploiting the structure of $\hat{\mathbf{f}}_{-1}$ and the symmetry of $ \hat{\mathbf{f}}_0^{-\frac{1}{2}}$ , we obtain
\begin{equation*}
\begin{split}
  \hat{\mathbf{f}}_0^{-\frac{1}{2}} \mathbf{f}(\theta)\hat{\mathbf{f}}_0^{-\frac{1}{2}} &= \hat{\mathbf{f}}_0^{-\frac{1}{2}} \left( \hat{\mathbf{f}}_0+\hat{\mathbf{f}}_{-1}{\rm e}^{-\mathbf{i}\theta}+\hat{\mathbf{f}}_{1}{\rm e}^{\mathbf{i}\theta}\right)\hat{\mathbf{f}}_0^{-\frac{1}{2}}\\
  &= I_d +\left(\hat{\mathbf{f}}_0^{-\frac{1}{2}} u\right)\left(v^T\hat{\mathbf{f}}_0^{-\frac{1}{2}} \right){\rm e}^{-\mathbf{i}\theta}
  +\left(\hat{\mathbf{f}}_0^{-\frac{1}{2}} v\right)\left(u^T\hat{\mathbf{f}}_0^{-\frac{1}{2}} \right){\rm e}^{\mathbf{i}\theta}\\
  &=  I_d+
  \left(\hat{\mathbf{f}}_0^{-\frac{1}{2}} u\right)\left(\hat{\mathbf{f}}_0^{-\frac{1}{2}}v \right)^T{\rm e}^{-\mathbf{i}\theta}
    +\left(\hat{\mathbf{f}}_0^{-\frac{1}{2}} v\right)\left(\hat{\mathbf{f}}_0^{-\frac{1}{2}} u \right)^T{\rm e}^{\mathbf{i}\theta}.
  \end{split}
\end{equation*}
Setting $w=\hat{\mathbf{f}}_0^{-\frac{1}{2}} u$ and $z=\hat{\mathbf{f}}_0^{-\frac{1}{2}} v$, the function $ \hat{\mathbf{f}}_0^{-\frac{1}{2}} \mathbf{f}\hat{\mathbf{f}}_0^{-\frac{1}{2}}$ can be rewritten as

\begin{equation*}
 \hat{\mathbf{f}}_0^{-\frac{1}{2}} \mathbf{f}(\theta)\hat{\mathbf{f}}_0^{-\frac{1}{2}}= I_d+ A(\theta),
\end{equation*}
with $A(\theta)=wz^T{\rm e}^{-\mathbf{i}\theta}
    +zw^T{\rm e}^{\mathbf{i}\theta}$.

Consequently, the eigenvalues of $  \hat{\mathbf{f}}_0^{-\frac{1}{2}} \mathbf{f}(\theta)\hat{\mathbf{f}}_0^{-\frac{1}{2}}$ are of the form
\begin{equation*}
\left\{1+\lambda_j\left(A(\theta)\right)\right\}.
\end{equation*}

For all $\theta$, $A(\theta)$ is a matrix with rank at most equal to 2 and we denote by 
  $\lambda_1(A(\theta))$ and $\lambda_2(A(\theta))$ the (possibly) non-zero eigenvalues. The values $\lambda_1(A(\theta))$ and $\lambda_2(A(\theta))$ can be computed exploiting the relations $\lambda_1(A)\lambda_2(A(\theta))=\frac{Tr\left(A(\theta)\right)^2-Tr\left(A(\theta)^2\right)}{2}$ and $\lambda_1(A(\theta))+\lambda_2(A(\theta))=Tr\left(A(\theta)\right)$ which are

\begin{equation*}
        \lambda_1(A(\theta))\lambda_2(A(\theta))=  (z^Tw)^2-||w||^2||z||^2; \quad \lambda_1(A(\theta))+\lambda_2(A(\theta))=2z^Tw\cos\theta,
    \end{equation*}
    obtaining 
    \begin{equation*}
        \lambda_{1,2}(A(\theta))= z^Tw\cos\theta\pm \sqrt{(z^Tw)^2(\cos^2\theta-1)+||w||^2||z||^2}
    \end{equation*}
    and 
    \begin{equation*}
        \max_\theta \lambda_{\max}\left(\hat{\mathbf{f}}_0^{-\frac{1}{2}} \mathbf{f}(\theta)\hat{\mathbf{f}}_0^{-\frac{1}{2}}\right)= 1+ \max_\theta \left(z^Tw\cos\theta+\sqrt{(z^Tw)^2(\cos^2\theta-1)+||w||^2||z||^2}
 \right).
    \end{equation*}
The proof is complete applying the definition of infinity norm for matrix-valued function and exploit the fact that $\hat{\mathbf{f}}_0^{-\frac{1}{2}} \mathbf{f}\hat{\mathbf{f}}_0^{-\frac{1}{2}}$ is HPD. 
\end{proof}
\subsection{Approximation Property}\label{ssec:approximation_prop}

The current subsection contains the proof of the approximation property $(c)$ of Theorem \ref{thm:Rstub}  choosing as grid transfer operator the aggregation  matrix  $P_{n,k}^d$  defined in equation (\ref{eq:aggr_P_I}). Consequently, the combination of Theorem \ref{th:tgmopt} and the findings on the smoothing property of Subsection  \ref{ssec:smoothing_prop} imply the convergence and optimality of the TGM.

\begin{theorem} \label{th:tgmopt}
%
%

	Consider the matrix $A_{N}:=\mathcal{C}_{n}({\mathbf{f}})$, with $\mathbf{f}$ $d\times d$ matrix-valued trigonometric polynomial, $\mathbf{f}\geq0$, such that condition (\ref{eqn:condition_on_f}) is satisfied. 
	Let $P^d_{n,k}$ 
	be the projecting operator defined as in equation (\ref{eq:aggr_P_I}).
	Then, there exists a positive value $\gamma$ independent of $n$ such that inequality $(c)$ in Theorem \ref{thm:Rstub} is satisfied.
\begin{proof}
The first part of the proof takes inspiration from \cite[Theorem 5.2]{MR4284081}. We report all the details for completeness, uniforming the notation. We remind that in order to prove that there exists $\gamma>0$ independent of $n$ such that for any $x_{N}\in\mathbb{C}^{N}$
\begin{align}\label{condW}
	\min_{y\in\mathbb{C}^{K}}\|x_{N}-P^d_{n,k}y\|_{2}^{2}\leq \gamma\|x_{N}\|_{A_{N}}^{2},
\end{align}
we can choose a special instance of $y$ in such a way that the previous inequality is
	reduced to a matrix inequality in the sense of the partial ordering of the real space of
	 Hermitian matrices. 
  For any $x_N\in\mathbb{C}^N$, let $\overline{y}\equiv\overline{y}(x_{N})\in\mathbb{C}^{K}$ be defined as
	$
	\overline{y}=(P^d_{n,k})^{H}x_{N}.
$
	Therefore, (\ref{condW}) is implied by
	\begin{align*}
	\|x_{N}-P^d_{n,k}\overline{y}\|_{2}^{2}\leq \gamma\|x_{N}\|_{A_{N}}^{2},
	\end{align*}
	where the latter is equivalent to the matrix inequality $
	G_{N}(\mathbf{p})^{H}G_{N}(\mathbf{p})\leq \gamma A_{N}$	with $G_{N}(\mathbf{p})=I_{N}-P^d_{n,k}(P^d_{n,k})^{H}$. By construction, the matrix $G_N(\mathbf{p})$ is a Hermitian unitary projector, in fact $G_N(\mathbf{p})^{H}G_N(\mathbf{p})= G_{N}(\mathbf{p})^{2} = G_N(\mathbf{p})$. As a consequence, the preceding matrix inequality can be rewritten as
\begin{small}
	\begin{align}\label{rela}
	G_{N}(\mathbf{p})\leq \gamma \mathcal{C}_{n}({\mathbf{f}}).
	\end{align}
\end{small}
Then, we have to prove that there exists $\gamma>0$ such that
\begin{equation}
    \label{eq:dis_approx_prop}
    I_{d n} -P_{n,k}^d(P_{n,k}^d)^H\le \gamma \mathcal{C}_{n}({\mathbf{f}})
\end{equation}
i.e.
\begin{equation*}
    I_{d n} -I_{n}\otimes q_{\bar{\jmath}}(\theta_0)q_{\bar{\jmath}}(\theta_0)^H\le \gamma \mathcal{C}_{n}({\mathbf{f}})
\end{equation*}
i.e.
\begin{equation*}
    I_{d n} -I_{n}\otimes q_{\bar{\jmath}}(\theta_0)q_{\bar{\jmath}}(\theta_0)^H\le \gamma (F_n\otimes I_d)D_{n}(\mathbf{f}) (F_n^H\otimes I_d),
\end{equation*}
i.e.
\begin{equation*}
    I_{d n} -(F_n^H\otimes I_d)(I_{n}\otimes q_{\bar{\jmath}}(\theta_0)q_{\bar{\jmath}}(\theta_0)^H)(F_n\otimes I_d)\le \gamma D_{n}(\mathbf{f}) ,
\end{equation*}
i.e.
\begin{equation*}
    I_{d n} -I_{n}\otimes q_{\bar{\jmath}}(\theta_0)q_{\bar{\jmath}}(\theta_0)^H\le \gamma D_{n}(\mathbf{f}) ,
\end{equation*}
where $D_n(\mathbf{f})=diag_{j=0,\dots, n-1}\mathbf{f}\left(\theta_j^{(n)}\right)$ and $I_{n}\otimes q_{\bar{\jmath}}(\theta_0)q_{\bar{\jmath}}(\theta_0)^H=diag_{j=0,\dots, n-1}(q_{\bar{\jmath}}(\theta_0)q_{\bar{\jmath}}(\theta_0)^H)$. Consequently (\ref{eq:dis_approx_prop}) is equivalent to 
\begin{equation*}
    I_d-q_{\bar{\jmath}}(\theta_0)q_{\bar{\jmath}}(\theta_0)^H\le \gamma \mathbf{f}\left(\theta_j^{(n)}\right) \qquad \forall j=0,\dots, n-1.
\end{equation*}

The latter inequality is equivalent to prove that $\exists \gamma>0$ such that $\forall j=0,\dots, n-1$
\begin{equation*}
   w^H \left(\gamma \mathbf{f}\left(\theta_j^{(n)}\right)-I_d+ q_{\bar{\jmath}}(\theta_0)q_{\bar{\jmath}}(\theta_0)^H\right)w\ge 0 \quad \forall w \in \mathbb{C}^{d}, \quad w^Hw=1
\end{equation*}
which is equivalent to
\begin{equation}
\label{eq:dis_approx_prop_symb}
    \gamma w^H\mathbf{f}\left(\theta_j^{(n)}\right)w-1+ |q_{\bar{\jmath}}(\theta_0)^Hw|^2\ge 0 \quad \forall w \in \mathbb{C}^{d}, \quad w^Hw=1.
\end{equation}

Now we consider two cases: $\theta_j^{(n)}\neq\theta_0$ and $\theta_j^{(n)} = \theta_0$. In the first case with $\theta_j^{(n)}\neq\theta_0$, we have $\mathbf{f}\left(\theta_j^{(n)}\right)>0$.
Since $|q_{\bar{\jmath}}(\theta_0)^Hw|^2>0$, the latter is implied if we prove that  $\exists \gamma>0$ such that $\forall j$
\begin{equation*}
     \gamma w^H\mathbf{f}\left(\theta_j^{(n)}\right)w-1\ge 0  \quad \forall w \in \mathbb{C}^{d}, \quad w^Hw=1.
\end{equation*}
Since we are in the case where $w^H\mathbf{f}\left(\theta_j^{(n)}\right)w>0$, it is sufficient to choose $\gamma$ such that
\[\gamma>\gamma_1=\max_{\theta_j^{(n)}\neq \theta_0}\frac{1}{\min_{w\in \mathbb{C}^d, w^Hw=1}w^H\mathbf{f}\left(\theta_j^{(n)}\right)w}
    =\max_{\theta_j^{(n)}\neq \theta_0}\frac{1}{\lambda_{\min}\left(\mathbf{f}\left(\theta_j^{(n)}\right)\right)}.\]

For the case $\theta_j^{(n)} =\theta_0$, in order to prove (\ref{eq:dis_approx_prop_symb}), we have two sub-cases:
\begin{itemize}
    \item[a] if $w=q_{\bar{\jmath}}(\theta_0)$, then  $\gamma w^H\mathbf{f}(\theta_0)w-1+ |q_{\bar{\jmath}}(\theta_0)^Hw|^2=0$ and so (\ref{eq:dis_approx_prop_symb}) holds.
    \item[b] if $w\perp q_{\bar{\jmath}}(\theta_0)$, then $w^H\mathbf{f}(\theta_0)w>0$ and it is sufficient to choose 
    \[\gamma>\gamma_2\ge \frac{1}{\min_{w\in \mathbb{C}^d, w\perp q_{\bar{\jmath}}(\theta_0)}w^H\mathbf{f}(\theta_0)w}.\]
\end{itemize}
Integrating both scenarios, $\theta_j^{(n)}=\theta_0$ and $\theta_j^{(n)} \neq\theta_0$, we have successfully demonstrated that $\gamma=\max(\gamma_1,\gamma_2)$ fulfills the condition specified by equation (\ref{condW}).
\end{proof}
\end{theorem}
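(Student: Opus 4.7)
My plan is to adapt the classical Ruge-St\"uben reduction of the approximation property to a pointwise inequality in the symbol, exploiting simultaneously the block-Fourier diagonalization of $\mathcal{C}_n(\mathbf{f})$ and the tensor structure of $P^d_{n,k}$. First, for fixed $x_N \in \mathbb{C}^N$ I would choose the specific candidate $\overline{y} := (P^d_{n,k})^H x_N$. Since the columns of $P^d_{n,k} = I_n \otimes q_{\bar{\jmath}}(\theta_0)$ are orthonormal (as $\|q_{\bar{\jmath}}(\theta_0)\|_2 = 1$), the matrix $G_N := I_N - P^d_{n,k} (P^d_{n,k})^H$ is a Hermitian orthogonal projector, so $G_N^H G_N = G_N$, and the approximation property $(c)$ reduces to the matrix inequality $G_N \leq \gamma\, \mathcal{C}_n(\mathbf{f})$.

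Next I would pass to the symbol level. Writing $\mathcal{C}_n(\mathbf{f}) = (F_n \otimes I_d) D_n(\mathbf{f}) (F_n^H \otimes I_d)$ with $D_n(\mathbf{f}) = \diag_{j \in \mathcal{I}_n} \mathbf{f}(\theta_j^{(n)})$, and observing that $G_N = I_n \otimes (I_d - q_{\bar{\jmath}}(\theta_0) q_{\bar{\jmath}}(\theta_0)^H)$ commutes with $F_n \otimes I_d$ (the outer factor is the identity and the inner factor is constant in $j$), both sides block-diagonalize in the same basis. The inequality therefore splits into $n$ decoupled $d \times d$ local conditions
\begin{equation*}
    I_d - q_{\bar{\jmath}}(\theta_0)\, q_{\bar{\jmath}}(\theta_0)^H \;\leq\; \gamma\, \mathbf{f}(\theta_j^{(n)}), \qquad j = 0, \ldots, n-1,
\end{equation*}
or equivalently $\gamma\, w^H \mathbf{f}(\theta_j^{(n)}) w \geq 1 - |q_{\bar{\jmath}}(\theta_0)^H w|^2$ for every unit vector $w \in \mathbb{C}^d$.

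To conclude, I would split into two cases according to the Fourier grid point. When $\theta_j^{(n)} \neq \theta_0$, condition \eqref{eqn:condition_on_f} gives $\mathbf{f}(\theta_j^{(n)}) > 0$ strictly, so dropping the nonnegative term $|q_{\bar{\jmath}}(\theta_0)^H w|^2$ and bounding $w^H \mathbf{f}(\theta_j^{(n)}) w \geq \lambda_{\min}(\mathbf{f}(\theta_j^{(n)}))$ forces $\gamma_1 := \max_{\theta_j^{(n)} \neq \theta_0} \lambda_{\min}(\mathbf{f}(\theta_j^{(n)}))^{-1}$. When $\theta_j^{(n)} = \theta_0$, I would decompose a unit $w$ as $w = \alpha\, q_{\bar{\jmath}}(\theta_0) + \beta\, v$ with $v \perp q_{\bar{\jmath}}(\theta_0)$, $\|v\|_2 = 1$: on $w = q_{\bar{\jmath}}(\theta_0)$ both sides vanish, while on the orthogonal complement the inequality reduces to $|\beta|^2 \leq \gamma\, v^H \mathbf{f}(\theta_0) v$, which is satisfied by $\gamma_2$ equal to the reciprocal of the smallest strictly positive eigenvalue of $\mathbf{f}(\theta_0)$. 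Taking $\gamma := \max(\gamma_1, \gamma_2)$ completes the argument.

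The main obstacle is precisely the singular case $\theta_j^{(n)} = \theta_0$, where both sides of the pointwise inequality degenerate. The decisive feature is that the aggregation operator is deliberately built from the eigenvector $q_{\bar{\jmath}}(\theta_0)$ associated with the vanishing eigenvalue of $\mathbf{f}(\theta_0)$, so that the rank-one subtraction $I_d - q_{\bar{\jmath}}(\theta_0) q_{\bar{\jmath}}(\theta_0)^H$ annihilates exactly the null direction of $\mathbf{f}(\theta_0)$. This geometric alignment between the coarse space and the near-kernel of $\mathbf{f}$ is what makes the aggregation-based coarse-grid correction compatible with condition $(c)$ in the block-Toeplitz setting.
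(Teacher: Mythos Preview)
Your proposal is correct and follows essentially the same route as the paper: the same choice $\overline{y}=(P^d_{n,k})^H x_N$, the same reduction to the projector inequality $G_N\le\gamma\,\mathcal{C}_n(\mathbf{f})$, the same block-Fourier diagonalization to the pointwise conditions $I_d-q_{\bar{\jmath}}(\theta_0)q_{\bar{\jmath}}(\theta_0)^H\le\gamma\,\mathbf{f}(\theta_j^{(n)})$, and the same case split yielding $\gamma=\max(\gamma_1,\gamma_2)$. Your treatment of the singular point via the full orthogonal decomposition $w=\alpha\,q_{\bar{\jmath}}(\theta_0)+\beta v$ is in fact slightly more complete than the paper's, which only checks the two extreme directions; note, however, that in both arguments $\gamma_1$ a priori depends on the grid $\{\theta_j^{(n)}\}$, a point the paper addresses separately in a subsequent remark.
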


\begin{remark}
    In the preceding proof, we set the approximation property constant $\gamma$ to $\max(\gamma_1,\gamma_2)$, with $\gamma_1$ being dependent on the points $\theta_j^{(n)}$ and, consequently, on the matrix size $n$. This setup does not permit us to infer anything regarding the method's optimality. Nevertheless, by adopting a continuous rather than discrete perspective, we observe that the minimum eigenvalue of $\mathbf{f}\left(\theta\right)$ approaches zero if and only if $\theta$ tends to $\theta_0$. 
    
    Referring back to equation \eqref{eq:dis_approx_prop_symb}, for the case where $w=q_{\bar{\jmath}}(\theta_0)$, we can express it as:
    \[
    \lim_{\theta \rightarrow \theta_0} \gamma w^H\mathbf{f}\left(\theta\right)w-1+ |q_{\bar{\jmath}}(\theta_0)^Hw|^2 = 0
    \]
    which suggests that this term does not influence the selection of $\gamma$. Alternatively, for the condition where $w \perp q_{\bar{\jmath}}(\theta_0)$, we have:
    \[
    \lim_{\theta \rightarrow \theta_0} \gamma w^H\mathbf{f}\left(\theta\right)w-1+ |q_{\bar{\jmath}}(\theta_0)^Hw|^2 = \gamma w^H\mathbf{f}\left(\theta_0\right)w-1
    \]
    which is satisfied by choosing $\gamma$ to be greater than or equal to $\gamma_2$, hence confirming the selection of $\gamma=\max(\gamma_1,\gamma_2)$.

    Since this analysis does not depend on $n$, we can conclude that the approximation property constant can be chosen indepentent from the matrix size and hence the two-grid method has an optimal convergence rate.
\end{remark}

\subsection{Multigrid Convergence and Optimality}\label{ssec:v-cycle}
The aim of this subsection is to construct a multigrid procedure for block structured matrices that at the finest level consists of the grid transfer operator and post smoother that we analysed in the previous subsection. In particular, we consider the following V-cycle strategy:
\begin{enumerate}
\item At 1st level set $
  P_{n,k}^d=I_{n}\otimes q_{\bar{\jmath}}(\theta_0).$
 The system matrix at the coarse levels is 
$\mathcal {C}_{n_i}(\tilde{f}_1)$ associated with a scalar valued symbol $\tilde{f}_1$.

\item We can apply the TGM for the  $\mathcal {C}_{n_\ell}(\tilde{f}_{\ell})$  exploiting the convergence theory for scalar structured matrices \cite{ADS}. That is, for $\ell=1,\dots,\ell_{\min}$, choose $P_{n,k}^{(\ell)} = \mathcal{C}_{{n_{\ell}}}(p) K_{n_{\ell},k_{\ell}}$, where $p(\theta)$ is such that

\begin{equation*}
				\begin{split}
				|p(\theta)|^2+|p(\theta+\pi)|^2&>0, \quad \forall \, \theta\in [0,2\pi),\\
				\underset{\theta\to \theta_0}{\lim}\frac{|p(\theta+\pi)|^2}{\tilde{f}_{\ell}(\theta)}&<\infty.
				\end{split}
			\end{equation*}
   Moreover, the algorithm is completed by one step of scalar damped Jacobi as post smoother with appropriate relaxation parameter.
	\end{enumerate}	

By Theorems  \ref{thm:Rstub}, \ref{thm:smoothing_par}  and \ref{th:tgmopt}  for the finest level it holds 
\begin{equation}\label{eq:rho_finest}
    \rho({\rm TGM_0})\le {\sqrt{1-\frac{a_{0}}{\gamma_0}}}<1.
\end{equation}

In addition, exploiting the results in \cite{AD} concerning scalar generating functions, we obtain  for $\ell=1,\dots,\ell_{\min}$, 
					$$\rho({\rm TGM_{\ell}})\le{\sqrt{1-\frac{a_{\ell}}{\gamma_\ell}}}<1,$$
which implies that we have level independence.
				
	In \cite{ADS} the authors prove that in order to obtain the MGM optimal convergence, we need to prove 
\begin{equation}\label{eq:inf_min_vcycle_condition}
      \inf_{\ell_{\min}} \min_{0\le\ell\le \ell_{\min}} \frac{1}{a_\ell}>0.
\end{equation}

Furthermore, they show that the latter requirements hold when the minimum ranges over  $\ell=1,\dots, \ell_{\min}$, that is for scalar circulant matrices when applying the multigrid procedure of item 2.

The presented multigrid procedure is then optimally convergent since the fact that $a_0$ can be chosen different from zero is a direct consequence of relation (\ref{eq:rho_finest}). 

\subsection{Choice of Optimal Parameters}\label{ssec:optimal_parameters}

In Subsection \ref{ssec:smoothing_prop} we give necessary conditions, based on the symbol, on the relaxation parameter for the block Jacobi smoother such that the smoothing property is fulfilled. However, using similar techniques, in general it is not straightforward to find the optimal parameter in the range of admissible values so that the multigrid method converges in the fewest possible iterations. Yet, when we combine block Jacobi and the aggregation-based grid transfer operator, it is feasible to compute the symbol of the TGM iteration matrix
\begin{multline*}
M_n = \left(I_{nd}-\omega_{\rm{post}} \left(I_n \otimes \hat{\mathbf{f}}_0^{-1}\right)\mathcal{C}_{n}({\mathbf{f}})\right)
\left(I_{nd}- P^d_{n,k}\left((P^d_{n,k})^{H}
\mathcal{C}_{n}({\mathbf{f}}) P^d_{n,k}\right)^{-1}(P^d_{n,k})^{H}\mathcal{C}_{n}({\mathbf{f}})\right)\cdot\\
\left(I_{nd}-\omega_{\rm{pre}} \left(I_n \otimes \hat{\mathbf{f}}_0^{-1}\right)\mathcal{C}_{n}({\mathbf{f}})\right)
\end{multline*}
for solving a linear system with coefficient matrix $\mathcal{C}_{n}({\mathbf{f}})$, with $\mathbf{f}$ trigonometric polynomial defined as in Section \ref{sec:two_grid}, and with $P^d_{n,k}$ defined as in \eqref{eq:aggr_P_I}. Exploiting the circulant algebra and Lemma \ref{lem:coarse_symbol}, we can write the symbol $\mathbf{g}$ of the matrix $M_n$ as
\[
    \mathbf{g} = 
    \left(I_{d}-\omega_{\rm{post}} \hat{\mathbf{f}}_0^{-1}{\mathbf{f}}\right)
    \left(I_d-
    \frac{1}{\tilde{f}}\,q_{\bar{\jmath}}(\theta_0)q_{\bar{\jmath}}^H(\theta_0)
    \mathbf{f}\right)
    \left(I_{d}-\omega_{\rm{pre}} \hat{\mathbf{f}}_0^{-1}{\mathbf{f}}\right)
\]
with $\tilde{f}$ defined as in \eqref{eq:f_tilde}.

The anlysis in Subsection \ref{ssec:approximation_prop} guarantees that taking a uniform sampling of the function $\mathbf{g}$ at the points where it is defined, computing the eigenvalues of each sample and taking the maximum of the computed values is a reasonable approximation of the spectral radius of the iteration matrix. We exploit this reasoning in Subsection \ref{ssec:over_relaxation}.

\section{Numerical Experiments}\label{sec:num_experiments}

The present section is devoted to show numerically the efficiency of several multigrid strategies obtained exploiting the theoretical results of Sections \ref{ssec:gtoperator_and_symbol} and \ref{ssec:smoothing_prop}. In particular, in Subsection  \ref{ssec:scalar_vs_block} we show that the use of block Jacobi smoothers improves the performance of the exiting symbol-based multigrid procedure involving classical grid transfer operator with respect to scalar smoothers. Subsections \ref{ssec:aggregation_experiments} and \ref{ssec:over_relaxation} are focused on testing the grid transfer operator defined by (\ref{eq:aggr_P_I}) and proper over-relaxation strategies. Finally, we compare the results of the two procedures both as standalone methods and as preconditioner for Krylov iterative methods in Subsection \ref{ssec:PPCG}. 
Most of the examples considered in the numerical section  are  block circulant and block Toeplitz(-like)  linear systems stemming from the discretization with $\mathbb{Q}_{d} $ Lagrangian FEM approximation of a second order differential problem and B-spline discretization with non-maximal regularity. However, we also consider an “artificial” block Toeplitz matrix constructed starting by a scalar Toeplitz by manipulating the associated generating function.
Indeed, this toy case shows in a immediate way how the conditions outlined in Theorem \ref{thm:smoothing_par}  possess a simplified expression in some practical cases.  In the implementation we use the standard stopping criterion  $\frac{\|r^{(k)}\|_2}{\|\mathbf{b}\|_2}<\epsilon$, where $r^{(k)}=\mathbf{b}-A\mathbf{x}^{(k)}$ and tolerance $\epsilon=10^{-6}$. We consider the right-hand side $\mathbf{b}$ defined as $\mathbf{b} = A\mathbf{x}$ and we take the null initial guess. All the tests are performed using MATLAB 2022b and the error equation at the coarsest level is solved with the MATLAB backslash function.   In our context we stop the recursion of the V-cycle when the matrix size is smaller than 64.

\subsection{Examples}\label{ssec:example_descr}
In the first part of the numerical section we introduce all the tested linear systems and we collect all the relevant spectral information on the associated symbols.

\subsubsection{Scalar Toeplitz Matrices Interpreted as Block Toeplitz Matrices}
\label{ssec:scalartoblock}

The first example we consider is that of an ``artificial'' block Toeplitz matrix constructed starting by a scalar Toeplitz.
Indeed, given a univariate and scalar-valued generating function $f(\theta)$ it is possible to compute the corresponding $d\times d$ matrix-valued generating function $\mathbf{f}^{[d]}$ defined by

  \begin{align}
\mathbf{f}^{[d]}(\theta)=\sum_{\ell=-r}^r \underbrace{T_d({\rm e}^{-\mathbf{i}\ell d\theta}f(\theta))}_{\hat{\mathbf{f}}_\ell^{[d]}}{\rm e}^{\mathbf{i}\ell\theta},\label{eq:blockversionf}
 \end{align} 

where $\hat{\mathbf{f}}_\ell^{[d]}$ are the corresponding matrix-valued Fourier coefficients. Then,

  \begin{align}
T_{ns}(f)=T_n(\mathbf{f}^{[d]}).\nonumber
 \end{align} 
In particular we consider the example in \cite{huckle} where $f(\theta)=2-2\cos\theta$ and the associated matrix-valued version is $\mathbf{f}^{[d]}(\theta)= \hat{\mathbf{f}}_0^{[d]}+\hat{\mathbf{f}}_
{-1}^{[d]}{\rm e}^{-\mathbf{i}\theta}+\hat{\mathbf{f}}_1^{[d]}{\rm e}^{\mathbf{i}\theta}$,
where
\[\hat{\mathbf{f}}_0^{[d]}=T_d(2-2\cos\theta); \quad \hat{\mathbf{f}}_{-1}^{[d]}=-e_de_1^T; \quad \hat{\mathbf{f}}_1^{[d]}=(\hat{\mathbf{f}}_{-1}^{[d]})^T=-e_1e_d^T.\]
The matrix valued function $\mathbf{f}^{[d]}(\theta)$ is such that 

\begin{itemize}
		\item $\lambda_{1}\left(\mathbf{f}^{[d]}(\theta)\right)$ has a zero of order 2 in $\theta_0=0$.
		\item $\mathbf{f}^{[d]}(0)q_{1}(0)= 0$, $q_{1}(0)=[1,\dots,1]^T$.
		\item The Fourier coefficients $\hat{\mathbf{f}}_1^{[d]}$ and $\hat{\mathbf{f}}_{-1}^{[d]}$ enjoy the expression of Theorem \ref{th:simplifyrank2smoother} with $u=-e_d$, and $w=e_1$. Since $e_1=Ye_d$ and $e_d=Ye_1$, with $Y$  backward identity matrix of size $d$, we obtain
\begin{equation*}
\max_\theta \lambda_{\max}\left(\left(\hat{\mathbf{f}}_0^{[d]}\right)^{-\frac{1}{2}}\mathbf{f}^{[d]}\left(\hat{\mathbf{f}}_0^{[d]}\right)^{-\frac{1}{2}}\right)= 1+ \max_\theta \left(v^TYv\cos\theta+ \sqrt{(v^TYv)^2(\cos^2\theta-1)+\|v\|^4}\right)
\end{equation*}
with $v= \left(\hat{\mathbf{f}}_0^{[d]}\right)^{-\frac{1}{2}} e_d$. The latter permits to compute the quantity

$$\left\|\left(\hat{\mathbf{f}}_0^{[d]}\right)^{-\frac{1}{2}}\mathbf{f}^{[d]}\left(\hat{\mathbf{f}}_0^{[d]}\right)^{-\frac{1}{2}}\right\|_{\infty}=2.$$
\item The function $\tilde{f}(\theta) = [1,\dots,1] \mathbf{f}^{[d]}(\theta) [1,\dots,1]^T $ is the scalar valued function $\tilde{f}(\theta)= 2-2\cos{\theta}$ for all $d$.
		\end{itemize}

Exploiting the computation in (\ref{eq:blockversionf}),  we can also compute the circulant matrix $C_n(\mathbf{f}^{[d]})$ generated by $\mathbf{f}^{[d]}$.

\subsubsection{Stiffness Matrices Using $\mathbb{Q}_{\textcolor{black}{d}}$ Lagrangian FEM}
\label{ssec:FEM}
The second case we present is given by the classical block structured problem stemming from the $\mathbb{Q}_{d} $ Lagrangian FEM approximation of  a second order differential problem.

The 1D problem {is given by:}
Find $u$ such that
\begin{small}
\begin{equation}\label{FEM_problem}
\begin{cases}
&-u''(x)=\psi(x) \quad {\rm on}\, \, (0,1), \\
& u(0)=u(1)=0 ,
\end{cases}
\end{equation}
\end{small}where $\psi(x)\in L^2\left(0,1\right)$. 

 If we discretize (\ref{FEM_problem}) using ${\mathbb{Q}_2}$ Lagrangian FEM \cite{qp}, the scaled stiffness matrix is a rank 1 correction of the Toeplitz matrix $T_n(\mathbf{\textbf{f}}_{\mathbb{Q}_2}),$ generated by the function  
\begin{equation*}
\begin{split}
\mathbf{f}_{\mathbb{Q}_2}(\theta)=&\frac{1}{3}\left(\begin{bmatrix}
16 & -8 \\
-8 &14
\end{bmatrix}+ \begin{bmatrix}
0 & -8 \\
0 &1
\end{bmatrix}{\rm e}^{\iota\theta}+\begin{bmatrix}
0 & 0\\
-8 &1
\end{bmatrix} {\rm e}^{-{\iota}				\theta}\right)=\\&\frac{1}{3}\begin{bmatrix}
16 & -8(1+{\rm e}^{\iota \theta})\\
-8(1+{\rm e}^{-\iota \theta})& 14+ {\rm e}^{\iota \theta}+{\rm e}^{-\iota \theta}\end{bmatrix}.
\end{split}
\end{equation*}
The function $\mathbf{f}_{\mathbb{Q}_2}(\theta)$ possesses the following properties:
		\begin{itemize}
		\item $\lambda_{1}(\mathbf{\textbf{f}}_{\mathbb{Q}_2}(\theta))$ has a zero of order 2 in $\theta_0=0$.
		\item $\mathbf{\textbf{f}}_{\mathbb{Q}_{{2}}}(0) q_{1}(0)= 0$, $q_{1}(0)=[1,1]^T$.
		\item The Fourier coefficients $\hat{\mathbf{f}}_{1}$ and $\hat{\mathbf{f}}_{-1}$ enjoy the expression of those of Theorem \ref{th:simplifyrank2smoother} and we can compute the quantity $\left\|\hat{\mathbf{f}}_0^{-\frac{1}{2}}\mathbf{\textbf{f}}_{\mathbb{Q}_2}\hat{\mathbf{f}}_0^{-\frac{1}{2}}\right\|_{\infty}=2$.
  \item The function $\tilde{f}_{\mathbb{Q}_2}(\theta)= [1,1] \mathbf{f}_{\mathbb{Q}_2}(\theta) [1,1]^T $ is the scalar valued function $\tilde{f}_{\mathbb{Q}_2}(\theta)= c_2 ( 2-2\cos{\theta})$, with $c_2$ constant $c_2 =\frac{1}{2} [1,1] \hat{\mathbf{f}}_0[1,1]^T.$
		\end{itemize}

Moreover these properties are valid for a general degree $d$ and related symbols $\mathbf{f}_{\mathbb{Q}_d}(\theta)$.
If in (\ref{FEM_problem}) periodic boundary conditions are imposed, then the stiffness matrix is the $dn\times dn$ circulant matrix $C_n(\mathbf{\textbf{f}}_{\mathbb{Q}_d})$ generated by $\mathbf{\textbf{f}}_{\mathbb{Q}_d}$.

\subsubsection{Stiffness matrices Using B-Splines}
\label{ssec:Splines}

The last example is given by the structured linear systems obtained when using B-Spline discretization of the problem (\ref{FEM_problem}).  In \cite{MR4019290} the authors analyse the  matrix-valued function associated with  B-Spline approximation for different values of degree $p$ and regularity $k$. Here we only consider the case $A_{p,k}$ for the pairs
$(p,k)$ equal to $(2,0)$, $(3,1)$ and $(3,0)$. Matrices $A_{p,k}$ are low-rank correction of the Toeplitz matrices generated by the following functions
\begin{align}
\label{eq:symb_bspline}
&\mathbf{f}^{(2,0)}(\theta)=\frac{1}{3}\left(\begin{bmatrix}
4 & -2 \\
-2 &8
\end{bmatrix}+ \begin{bmatrix}
0 & -2 \\
0 &-2
\end{bmatrix}{\rm e}^{\iota\theta}+\begin{bmatrix}
0 & 0\\
-2&-2
\end{bmatrix} {\rm e}^{-{\iota}\theta}\right)\\
&\mathbf{f}^{(3,1)}(\theta)=\frac{1}{40}\left(\begin{bmatrix}
48 & 0 \\
0&48
\end{bmatrix}+ \begin{bmatrix}
-15& -15 \\
-3 &-15
\end{bmatrix}{\rm e}^{\iota\theta}+\begin{bmatrix}
-15 & -3\\
-15 &-15
\end{bmatrix} {\rm e}^{-{\iota}\theta}\right)\\
&\mathbf{f}^{(3,0)}(\theta)=\frac{1}{10}\left(\begin{bmatrix}
12 & 3 &-6\\
3 &12 &-9\\
-6 &-9 &36\\
\end{bmatrix}+ \begin{bmatrix}
0& 0 &-9\\
0 &0 &-6\\
0&0 &3\\
\end{bmatrix}{\rm e}^{\iota\theta}+\begin{bmatrix}
0 & 0 &0\\
0 &0 &0\\
-9 &-6 &3\\
\end{bmatrix} {\rm e}^{-{\iota}\theta}\right).
\end{align}
The latter verify the following:
\begin{itemize}
    \item all the three minimum eigenvalue functions of $\mathbf{f}^{(2,0)}$, $\mathbf{f}^{(3,0)}$ and $\mathbf{f}^{(3,1)}$ have  a zero of order 2 associated with the eigenvector of all ones. 
    \item For all the 3 cases is possible to compute easily the quantities $$\left\|\left(\hat{\mathbf{f}}_0^{(p,k)}\right)^{-\frac{1}{2}}\mathbf{\textbf{f}}^{(p,k)}\left(\hat{\mathbf{f}}_0^{(p,k)}\right)^{-\frac{1}{2}}\right\|_{\infty}=2,$$ either exploiting Theorem \ref{th:simplifyrank2smoother} and the structure of the Fourier coefficient of $\mathbf{f}^{(2,0)}$ and $\mathbf{f}^{(3,0)}$ or the diagonal expression of the 0-th Fourier coefficient of $\mathbf{f}^{(3,1)}$. 
    \item  All the generating functions at coarser levels $\tilde{f}^{(p,k)} = [1,\dots,1] \mathbf{f}^{(p,k)}[1,\dots,1]^T $ have the form $\tilde{f}^{(p,k)} (\theta)= c (2-2\cos{\theta})$, with $c =\frac{1}{2} [1,\dots,1]  \hat{\mathbf{f}}_0^{(p,k)}[1,\dots,1]^T.$
\end{itemize}

\subsection{Scalar and Block smoothers in matrix-valued multigrid approach}\label{ssec:scalar_vs_block}
 Two multigrid optimal strategies, such as the geometric projection operator and the standard bisection grid transfer operator, were already studied in \cite{MR4284081,FRTBS} for the linear systems  involved in Section \ref{ssec:FEM} for the FEM discretization with $d=2$, $d=4$ and $d=8$.   However, in this subsection we compare the efficiency of matrix-valued multigrid approach when using  scalar and block Jacobi methods as smoothers. 

We construct a matrix-valued multigrid method for the block circulant matrix $C_{n}(\mathbf{\textbf{f}}_{\mathbb{Q}_d})$, using a grid trnasfer operator   \begin{equation*}
P_{n,k}^{_{\mathbb{Q}_{d}}}=C_n(\mathbf{p}_{_{\mathbb{Q}_{d}  }})(K_{n,k} \otimes I_d),
\end{equation*}
where $\mathbf{p}_{_{\mathbb{Q}_{d}  }}=\hat{a}_0+\hat{a}_{-1} {\rm e}^{-\imath\theta}+\hat{a}_{1} {\rm e}^{\imath\theta}$ is a trigonometric polynomial. The coefficients  $\hat{a}_0,\hat{a}_{-1}, \hat{a}_{1} $ depends on whether $d$ is even or odd. For the general expressions for all $d$ see \cite{MR4389580}[Section 5.2].

As scalar pre/post smoother we consider one step of relaxed Jacobi method with iteration matrix equal to $V_{n}:=I_{N}-\omega D_N^{-1}T_{n}(\mathbf{\textbf{f}}_{\mathbb{Q}_d})$, where ${D}_n:= \min_{j=1,\dots,\textcolor{black}{d}}{\left(\hat{\mathbf{f}}_{0}\right)_{(j,j)}}I_N$. The range of admissible values  $\omega$ verifies the following inequality:
\begin{equation}\label{eqn:jacobi_omega}
0<\omega< \frac{2}{\left\|\left(\diag \hat{\mathbf{f}}_0\right)^{-\frac{1}{2}}\mathbf{{f}}_{\mathbb{Q}_d}\left(\diag \hat{\mathbf{f}}_0\right)^{-\frac{1}{2}}\right\|_{\infty}}.
\end{equation}

Precisely we choose $\omega_{\rm post}$ as the midpoint of the interval of admissible values and $\omega_{\rm pre}=\frac{3}{2}\omega_{\rm post}$.

Concerning the block-smoothing strategy, we already prove that  an iteration matrix of the form (\ref{eq:defVn_block_Jacobi}) yields to the admissible smoothing parameters 

\[  0<\omega< \frac{2}{\left\|\hat{\mathbf{f}}_0^{-\frac{1}{2}}\mathbf{{f}}_{\mathbb{Q}_d}\hat{\mathbf{f}}_0^{-\frac{1}{2}}\right\|_{\infty}},\]

with  $\left\|\hat{\mathbf{f}}_0^{-\frac{1}{2}}\mathbf{\textbf{f}}_{\mathbb{Q}_d}\hat{\mathbf{f}}_0^{-\frac{1}{2}}\right\|_{\infty}=2$ for any $d$. Then we fix for the experiments  $\omega_{\rm pre}=3/4$ and $\omega_{\rm post}=1/2$ to dampen different frequencies of the error.

Table \ref{tab:simax_blockvsscalar_jacobi} presents a comparison between the block and scalar Jacobi smoothers performance in terms of iterations needed for convergence and computational solving times when applying the V-cycle to  $C_n(\mathbf{{f}}_{\mathbb{Q}_d})$. Both methods show a convergent and optimal behaviour in term of iterations. However, the block smoother approach is preferable when increasing the size of the block $d$. Indeed, even if the cost of the system for block Jacobi is naturally high for bigger $d$, total solving time remains lower, since the number of iterations required of the global method remains equal or decreases for $d=2,4,8$. 

\begin{footnotesize}
\begin{table}[htb]\begin{center}
		\begin{tabular}{c|c|cc|cc|cc|cc}
	&& \multicolumn{2}{c}{{$d=2$}}&\multicolumn{2}{c}{{$d=3$}}&\multicolumn{2}{c}{{$d=4$}}&\multicolumn{2}{c}{$d=8$}\\
			\hline
			
& $t $&  {T(s)} & {Iter} & {T(s)} & {Iter} & {T(s)} & {Iter} & {T(s)} & {Iter} \\
\hline\hline
\multirow{5}{*}{Block Jacobi} &			10&0.0043  & 8 &0.0066&8& 0.0079&7&0.0241&6\\
\cline{2-2}	&		                      			11&0.0079  & 8 &0.0119&8& 0.0156&7&0.0406&6\\
\cline{2-2}	&		                     			12&0.0163  & 8 &0.0257&8& 0.0302&7&0.0800&6\\
\cline{2-2}	&		                     			13&0.0295  & 8 &0.0501&8& 0.0595&7&0.1413&6\\
\cline{2-2}	&		                     			14&0.0573  & 8 &0.0960&8& 0.1167&7&0.2853&6\\
\hline\hline	
\multirow{5}{*}{Scalar Jacobi}	&		10& 0.0016& 7 &0.0035&11&0.0069 &17&0.4138&251\\
\cline{2-2}								&		11& 0.0023& 7 &0.0055&11&0.0140 &17&0.5709&253\\
\cline{2-2}								&		12& 0.0055& 7 &0.0120&11& 0.0248&17&0.9282&251\\
\cline{2-2}								&		13& 0.0086& 7 &0.0204&11&0.0426 &17&1.5912&252\\
\cline{2-2}								&		14& 0.0170& 7 &0.0386&11&0.0808 &17&2.9187&252\\
			\hline
		\end{tabular}
	\end{center}
	
	\caption{Comparison between the block and scalar Jacobi smoothers performance in terms of iterations and CPU time for the V-cycle applied to  $C_n(\mathbf{\textbf{f}}_{\mathbb{Q}_d})$. }\label{tab:simax_blockvsscalar_jacobi}

\end{table}
\end{footnotesize}

\subsection{Symbol-based aggregated multigrid methods}\label{ssec:aggregation_experiments}
In this subsection we numerically verify the results that we proved in Section \ref{sec:proofs}. The grid transfer operator can be seen in Equation \eqref{eq:aggr_P_I} and we use it in combination with the block Jacobi smoother. The smoothing parameters are chosen according to Theorem \ref{thm:smoothing_par}, in such a way that the smoothing properties are fulfilled. In particular, we make the same choices as in Subsection \ref{ssec:scalar_vs_block}. Concerning the grid transfer operator, in Subsections \ref{ssec:scalartoblock}--\ref{ssec:Splines} we observed that in all the examples that we are considering, the eigenvector of the generating function associated to the null eigenvalue is the vector of all ones $[1,\dots,1]^T$. Therefore, the grid transfer operator defined in \eqref{eq:aggr_P_I} becomes
\[
P^d_{n,k}=I_{n}\otimes \begin{bmatrix}
    1 \\ \vdots \\ 1
\end{bmatrix}_{d \times 1}.
\]
Furthermore, all the generating functions at coarser levels $\tilde{f} = [1,\dots,1] \mathbf{f} [1,\dots,1]^T $ have the form $\tilde{f}(\theta)= c (2-2\cos{\theta})$, with $c$ constant depending on the $0$-th Fourier coefficient. So, when we apply the V-cycle strategy described in Subsection \ref{ssec:v-cycle}, we can take the standard linear interpolation as grid transfer operator and $\omega_{\rm post}=1/2$, see \cite{AD}.

The following tables show the optimality of the aggregated based multigrid strategy in combination with Jacobi as post smoother.  In particular we consider one iteration of block Jacobi smoother at the finest level with parameter $\omega_{\rm post}= 1/2$ as the midpoint of the interval of admissible values given Theorem \ref{thm:smoothing_par} and the properties of the involved generating functions listed in Subsecion \ref{ssec:example_descr}. 

We report the number of iterations of the TGM and V-cycle methods when applied to matrix system $C_n(\mathbf{f}^{[d]})$, varying $d=2,4,8$ in Table \ref{tab:TGM_Vcycle_BJ_C_toy}. In this subsection, we are not focused on the choice of the optimal parameters, our goal is instead to numerically validate the results in Section \ref{sec:proofs}, which guarantee that the number of multigrid iterations for convergence does not depend on the matrix size.


\begin{table}[htb]
\begin{center}
\begin{tabular}{c c c c | c c c c c}
\hline
\multirow{2}{*}{$t$} & \multicolumn{3}{c}{$\text{TGM}$} & \multicolumn{3}{c}{$\text{V-cycle}$} \\
\cline{2-4} \cline{5-7}
& $d=2$ & $d=4$ & $d=8$ & $d=2$ & $d=4$ & $d=8$ \\
\hline
15 & 33 & 52 & 88 & 42 & 69 & 115 \\
16 & 33 & 52 & 88 & 42 & 69 & 115 \\
17 & 33 & 52 & 88 & 42 & 69 & 115 \\
18 & 33 & 52 & 88 & 42 & 69 & 115 \\
19 & 33 & 52 & 88 & 42 & 69 & 115 \\
20 & 33 & 52 & 88 & 42 & 69 & 115 \\
\hline
\end{tabular}
\end{center}
\caption{Number of iterations for the TGM and V-cycle methods applied to $C_n(\mathbf{f}^{[d]})$ varying the block size $d$. Only 1 iteration of block Jacobi post smoother is applied.}
\label{tab:TGM_Vcycle_BJ_C_toy}
\end{table}


An analogous behaviour is shown in Table \ref{tab:TGM_Vcycle_BJ_C_Fem} for TGM and V-cycle iterations when increasing the matrix size of the matrices $C_n(\mathbf{\textbf{f}}_{\mathbb{Q}_d})$ for $d=2,4,8$.


\begin{table}[htb]
\begin{center}
\begin{tabular}{c c c c | c c c c c}
\hline
\multirow{2}{*}{$t$} & \multicolumn{3}{c}{$\text{TGM}$} & \multicolumn{3}{c}{$\text{V-cycle}$} \\
\cline{2-4} \cline{5-7}
& $d=2$ & $d=4$ & $d=8$ & $d=2$ & $d=4$ & $d=8$ \\
\hline
15 & 37 & 64 & 121 & 48 & 84 & 155 \\
16 & 37 & 64 & 121 & 48 & 84 & 155 \\
17 & 37 & 64 & 121 & 48 & 84 & 155 \\
18 & 37 & 64 & 121 & 48 & 84 & 155 \\
19 & 37 & 64 & 121 & 48 & 84 & 155 \\
20 & 37 & 64 & 121 & 48 & 84 & 155 \\
\hline
\end{tabular}
\end{center}
\caption{Number of iterations for the TGM and V-cycle methods applied to $C_n(\mathbf{\textbf{f}}_{\mathbb{Q}_d})$ varying the block size $d$. Only 1 iteration of block Jacobi post smoother is applied.}
\label{tab:TGM_Vcycle_BJ_C_Fem}
\end{table}

Finally, TGM and V-cycle aggregated based methods are applied on the circulant matrices $\mathcal{C}_n(\mathbf{\textbf{f}}^{(p,k)})$ obtained in B-spline approximation for the pairs $(p,k)$ equal to $(2,0)$, $(3,1)$ and $(3,0)$. 
We report in Table \ref{tab:TGM_Vcycle_BJ_C_Bspline}  the number of iterations needed to reach the desired tolerance for the TGM and V-cycle methods, respectively. In both cases, we observe that the number of iterations remains constant when increasing the matrix size.


\begin{table}[htb]
\begin{center}
\begin{tabular}{c c c c | c c c c c}
\hline
\multirow{2}{*}{$t$} & \multicolumn{3}{c}{$\text{TGM}$} & \multicolumn{3}{c}{$\text{V-cycle}$} \\
\cline{2-4} \cline{5-7}
& $(2,0)$&$(3,1)$&$(3,0)$ & $(2,0)$&$(3,1)$&$(3,0)$ \\
\hline
15 & 24 & 32 & 30 & 29 & 34 & 38 \\
16 & 24 & 32 & 30 & 29 & 34 & 38 \\
17 & 24 & 32 & 30 & 29 & 34 & 38 \\
18 & 24 & 32 & 30 & 29 & 34 & 38 \\
19 & 24 & 32 & 30 & 29 & 34 & 38 \\
20 & 24 & 32 & 30 & 29 & 34 & 38 \\
\hline
\end{tabular}
\end{center}
\caption{Number of iterations for the TGM and V-cycle methods when applied to $\mathcal{C}_n(\mathbf{\textbf{f}}^{(p,k)})$ for $(p,k)$ equal to $(2,0)$, $(3,1)$ and $(3,0)$.  Only 1 iteration of block Jacobi post smoother is applied.}
\label{tab:TGM_Vcycle_BJ_C_Bspline}
\end{table}

\subsection{Over-relaxation}\label{ssec:over_relaxation}

Efficient convergence of aggregation--based multigrid methods, especially for elliptic problems of the second order, can be substantially enhanced through strategic over-relaxation of the coarse grid correction, a concept explored by Braess in \cite{MR1370108}. Figure 3 in Braess's work shows this with a function linear on three segments, demonstrating that the approximation properties can be less than ideal, particularly evident in the one-dimensional Poisson equation scenario. The sub-optimal approximation quality is enhanced by introducing in the coarse-grid correction an over-relaxation factor, $\alpha > 1$, where numerical calculations indicate $\alpha \approx 2$ to be optimal for the Poisson equation, although \(\alpha = 1.8\) is chosen to prevent overshooting. Further explanations related to this approach are present in \cite{MR2385900,MR1820878}.

The iteration matrix of the TGM with over-relaxation of the coarse grid correction is
\[{\rm TGM}(A_{n},V_{n,\rm{pre}},V_{n,\rm{post}},P_{n,k})=
V_{n,\rm{post}}
\left[I_{n}-\alpha P_{n,k}\left(P_{n,k}^{H}
A_{n}P_{n,k}\right)^{-1}P_{n,k}^{H}A_{n}\right]V_{n,\rm{pre}},\]
This corresponds to perform an over-relaxation when computing the interpolation of the error $$\tilde{y}_k= \alpha P_{n,k} y_k.$$
We perform exactly one iteration of pre and post smoother and, for simplicity, we consider the same value of pre and post smoothing parameter $\omega$. 
 The choice of $\omega$ can be improved in relation to $\alpha$. Precisely, we select the pair $(\alpha,\omega)$ which minimizes the spectral radius of the iteration matrix.

In the following, we apply this strategy to the examples  of Subsection \ref{ssec:scalartoblock}, Subsection \ref{ssec:FEM}, and Subsection \ref{ssec:Splines}. 
An efficient computation of the pseudo-spectral radius \cite{MR2369301} can be performed considering the associated circulant case, for which we can easily compute the maximum of the eigenvalue functions of the spectral symbol of the TGM iteration matrix, as we explained in Subsection \ref{ssec:optimal_parameters}. Indeed, the symbol of the TGM iteration matrix can be computed in analytic form also in the over-relaxed scenario, with a dependence on $\alpha$. Moreover, its eigenvalue functions can either be computed analytically or evaluated on a uniform grid on $[0,2\pi]$. These computations allow us to choose the best pair of smoothing and over-relaxation parameters in the circulant case. The presence of possible outliers for the  TGM iteration matrix in the Toeplitz case can alter the choice of the best pair $(\alpha_{\rm opt},\omega_{\rm opt})$ when using the multigrid as a standalone method. Yet, we show that the optimal value estimated in the circulant case does not differ too much from the optimal one.

Figure \ref{fig:pair_2meno2} shows the magnitude of the spectral radius of the TGM iteration matrix for the system  $C_n(\mathbf{f}^{[2]})$ computed over a range of admissible values for $\omega$ and $\alpha$. Precisely we consider $17$ equispaced values in the interval $[0.5,0.9]$ for $\omega$ and $11$ equispaced values in the interval $[1,3]$ for $\alpha$.

\begin{figure}[h!]
\centering
\includegraphics[width=0.70\textwidth]{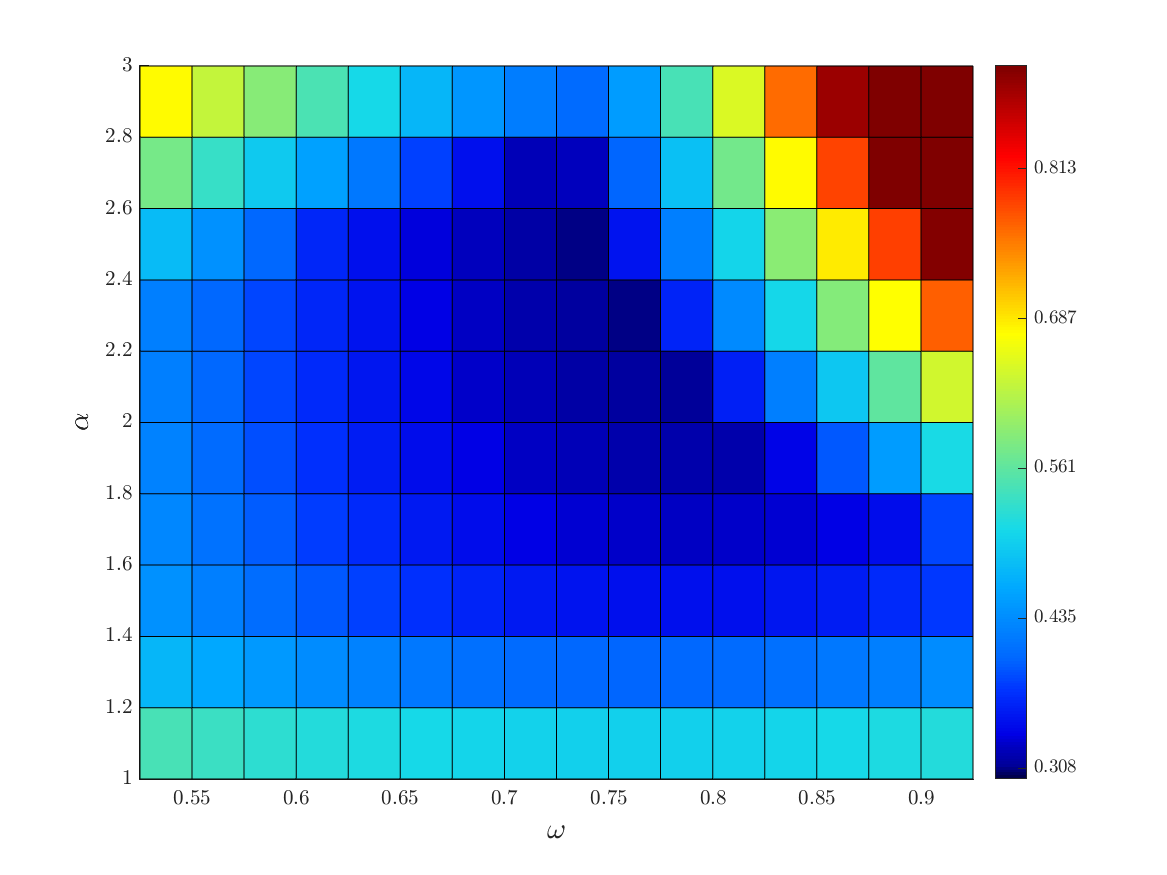}
\caption{Plot of the magnitude of the spectral radius of the TGM iteration matrix for the system  $C_n(\mathbf{f}^{[2]})$ computed over equispaced values of the pair $(\alpha,\omega)$. }\label{fig:pair_2meno2}
\end{figure}

In Table \ref{tab:pair_2meno2} we show the results in term of iterations with the estimated choices $\alpha_{\rm est}=2.2$ and $\omega_{\rm est}=0.75$ which provide $\rho(TGM)\approx 0.308$ in comparison with the case $\alpha=1$ (without over-relaxation) and the value $\omega=0.75$ in $[0,1]$ in which $\rho(TGM)\approx 0.5$.

\begin{table}
\begin{center}
		\begin{tabular}{c|cc|cc}
			
		$N=2\cdot 2^t $	& \multicolumn{2}{c|}{{$(\alpha_{\rm est},\omega_{\rm est})=(2.2, 0.75)$}} & \multicolumn{2}{c}{{$(\alpha=1,\omega=0.75)$}} \\
			\hline
			{$t$} & {TGM} & {V-Cycle} & {TGM} & {V-Cycle} \\
			\hline
		    8 & 11 & 11 & 14 & 16 \\
			9 & 11 & 11 & 14 & 16 \\
			10 & 11 & 11 & 14 & 16\\
			11 & 11 & 11 & 14 & 16  \\
			12& 11 & 11 &14 & 16\\
		
	\end{tabular}
		\caption{ Two-grid and V-cycle iterations with and without the over-relaxation strategy for the matrix $C_n(\mathbf{f}^{[2]})$.} \label{tab:pair_2meno2}
		\end{center}
\end{table}

\begin{table}
\begin{center}
		\begin{tabular}{c|cc|cc|cc}			
		$N=2\cdot 2^t $	& \multicolumn{2}{c|}{{$(\alpha_{\rm est},\omega_{\rm est})$}} 	& \multicolumn{2}{c|}{{$(\alpha_{\rm opt},\omega_{\rm opt})$}} & \multicolumn{2}{c}{{$(\alpha=1,\omega=0.75)$}} \\
			\hline
			{$t$} & {TGM} & {V-Cycle} & {TGM} & {V-Cycle} & {TGM} & {V-Cycle} \\
			\hline
		    8 & 12 & 11 & 10 & 11 & 13 & 15 \\
			9 & 11 & 11 & 11 & 11 & 14 & 16 \\
			10 & 11 & 11 & 10 & 11 & 13 & 15\\
			11 & 11 & 11 & 11 & 11 & 14 & 16  \\
			12& 11 & 11 &10 & 11 & 14 & 16\\
                13& 11 & 11 &11 & 11 & 13 & 16\\	
	\end{tabular}
		\caption{ Two-grid and V-cycle iterations with and without the over-relaxation strategy for the matrix $T_n(\mathbf{f}^{[2]})$. In this case, $(\alpha_{\rm est},\omega_{\rm est})=(2.6,0.725)$ and $(\alpha_{\rm opt},\omega_{\rm opt})=(1.8,0.775)$.} \label{tab:pair_2meno2_toep}
		\end{center}
\end{table}

Figure \ref{fig:pair_FEM} shows the magnitude of the spectral radius of the TGM iteration matrix for the system  $C_n(\mathbf{\textbf{f}}_{\mathbb{Q}_2})$ computed over a range of admissible values for $\omega$ and $\alpha$. Precisely we consider $13$ equispaced values in $[0.5,0.9]$ for $\omega$ and $16$ equispaced values in $[1,3.4]$ for $\alpha$.

\begin{figure}[htb]
\centering
\includegraphics[width=0.70\textwidth]{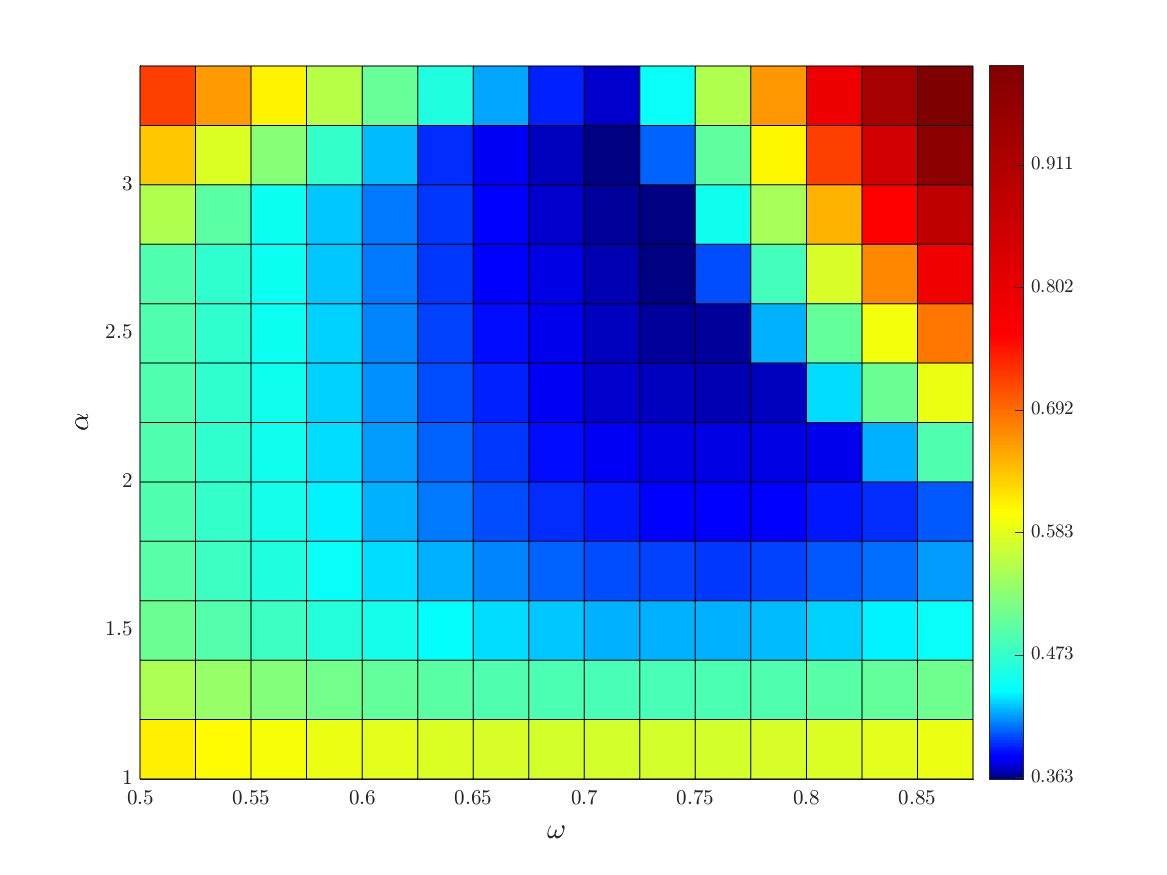}
\caption{Plot of the magnitude of the spectral radius of the TGM iteration matrix for the system  $C_n(\mathbf{\textbf{f}}_{\mathbb{Q}_2})$ computed over 15 equispaced values of the pair $(\alpha,\omega)$. }\label{fig:pair_FEM}
\end{figure}

In Table \ref{tab:pair_FEM} we show the results in term of iterations with the estimated choices $\alpha_{\rm est}=2.6$ and $\omega_{\rm est}=0.725$ which provide $\rho(TGM)\approx 0.363$ in comparison with the case $\alpha=1$ (without over-relaxation)  and $\omega=0.725$ which provide $\rho(TGM)\approx 0.571$.

\begin{table}
\begin{center}
		\begin{tabular}{c|cc|cc}
			
		$N=2\cdot 2^t $	& \multicolumn{2}{c|}{{$(\alpha_{\rm est},\omega_{\rm est})=(2.2,0.75)$}} & \multicolumn{2}{c}{{$(\alpha=1,\omega=0.725)$}} \\
			\hline
			{$t$} & {TGM} & {V-Cycle} & {TGM} & {V-Cycle} \\
			\hline
			8 & 12 & 12 & 16 & 18  \\
			9 &  12& 12 &  16 & 19 \\
			10 &  12& 12 & 16 & 18\\
			11 &  12& 12 & 16 & 18  \\
			12&  12& 12 &16 & 18 \\
			13 & 12 & 12  & 16& 18\\
		
	\end{tabular}
		\caption{ Two-grid and V-cycle iterations with and without the over-relaxation strategy for the matrix $C_n(\mathbf{{f}}_{\mathbb{Q}_2})$.}
		 \label{tab:pair_FEM}
		\end{center}
\end{table}

\begin{table}
\begin{center}
		\begin{tabular}{c|cc|cc|cc}			
		$N=2\cdot 2^t $	& \multicolumn{2}{c|}{{$(\alpha_{\rm est},\omega_{\rm est})$}} 	& \multicolumn{2}{c|}{{$(\alpha_{\rm opt},\omega_{\rm opt})$}} & \multicolumn{2}{c}{{$(\alpha=1,\omega=0.725)$}} \\
			\hline
			{$t$} & {TGM} & {V-Cycle} & {TGM} & {V-Cycle} & {TGM} & {V-Cycle} \\
			\hline
		    8 & 15 & 12 & 12 & 12 & 16 & 18 \\
			9 & 14 & 12 & 12 & 12 & 16 & 19 \\
			10 & 14 & 12 & 12 & 12 & 16 & 18\\
			11 & 13 & 12 & 12 & 12 & 16 & 18  \\
			12& 13 & 12 &12 & 12 & 16 & 18\\
                13& 13 & 12 &12 & 12 & 16 & 18\\	
	\end{tabular}
		\caption{ Two-grid and V-cycle iterations with and without the over-relaxation strategy for the matrix $T_n(\mathbf{{f}}_{\mathbb{Q}_2})$. In this case, $(\alpha_{\rm est},\omega_{\rm est})=(2.2,0.75)$ and $(\alpha_{\rm opt},\omega_{\rm opt})=(1.8,0.775)$.} \label{tab:pair_FEM_toep}
		\end{center}
\end{table}


Figure \ref{fig:pair_Bspline} shows the magnitude of the spectral radius of the TGM iteration matrix for the matrix system  $C_n(\mathbf{f}^{(2,0)})$ computed over a range of admissible values for $\omega$ and $\alpha$. Precisely we consider $13$ equispaced values in $[0.7,1]$ for $\omega$ and $13$ equispaced values in $[1,1.6]$ for $\alpha$.

\begin{figure}[htb]
\centering
\includegraphics[width=0.70\textwidth]{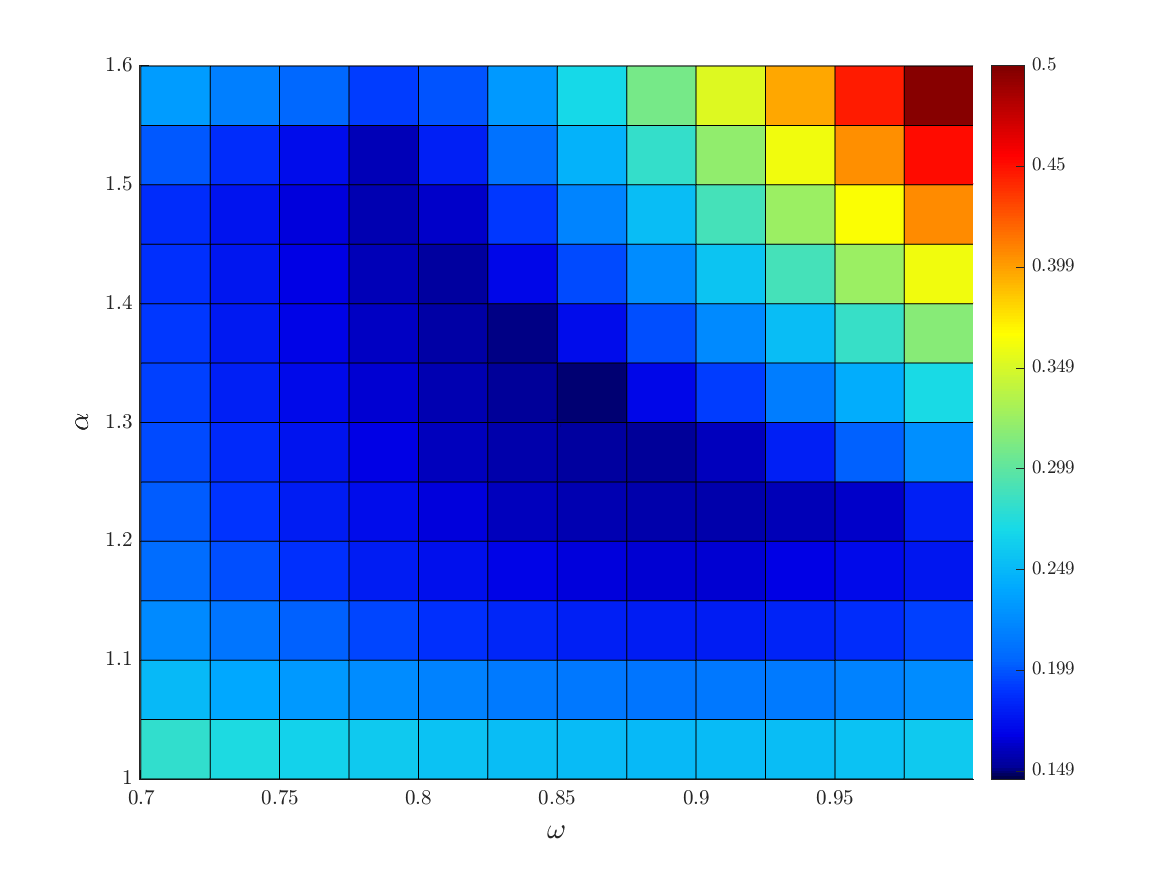}
\caption{Plot of the magnitude of the spectral radius of the TGM iteration matrix for the system   $A_{(2,0)}$  computed over equispaced values of the pair $(\alpha,\omega)$. }\label{fig:pair_Bspline}
\end{figure}

In Table \ref{tab:pair_B} we show the results in term of iterations with the estimated choices $\alpha_{\rm est}=1.3$ and $\omega_{\rm est}=0.85$ which provide $\rho(TGM)\approx 0.149$ in comparison with the case $\alpha=1$ (without over-relaxation)  and $\omega=0.85$ which provide $\rho(TGM)\approx 0.250$.

\begin{table}
\begin{center}
		\begin{tabular}{c|cc|cc}
			
		$N=2\cdot 2^t $	& \multicolumn{2}{c|}{{$(\alpha_{\rm est},\omega_{\rm est})=(1.3,0.85)$}} & \multicolumn{2}{c}{{$(\alpha=1,\omega=0.85)$}} \\
			\hline
			{$t$} & {TGM} & {V-Cycle} & {TGM} & {V-Cycle} \\
			\hline
			8 & 7 & 7 & 8 & 9  \\
			9 &  7& 7 &  8 & 10 \\
			10 &  7& 7 & 8 & 10\\
			11 &  7& 7 & 8 & 10  \\
			12&  7& 7 & 8 & 10  \\
			13 & 7 & 7  & 8& 9\\
		
	\end{tabular}
		\caption{ Two-grid and V-cycle iterations with and without the over-relaxation strategy for the matrix $C_n(\mathbf{f}^{(2,0)})$.}
		 \label{tab:pair_B}
		\end{center}
\end{table}

\begin{table}
\begin{center}
		\begin{tabular}{c|cc|cc|cc}			
		$N=2\cdot 2^t $	& \multicolumn{2}{c|}{{$(\alpha_{\rm est},\omega_{\rm est})$}} 	& \multicolumn{2}{c|}{{$(\alpha_{\rm opt},\omega_{\rm opt})$}} & \multicolumn{2}{c}{{$(\alpha=1,\omega=0.85)$}} \\
			\hline
			{$t$} & {TGM} & {V-Cycle} & {TGM} & {V-Cycle} & {TGM} & {V-Cycle} \\
			\hline
		    8 & 7 & 7 & 7 & 7 & 8 & 9 \\
			9 & 7 & 7 & 7 & 7 & 8 & 10 \\
			10 & 7 & 7 & 7 & 7 & 8 & 10\\
			11 & 7 & 7 & 7 & 7 & 8 & 10  \\
			12& 7 & 7 &7 & 7 & 8 & 10\\
                13& 7 & 7 &7 & 7 & 8 & 9\\	
	\end{tabular}
		\caption{ Two-grid and V-cycle iterations with and without the over-relaxation strategy for the matrix $T_n(\mathbf{f}^{(2,0)})$. In this case, $(\alpha_{\rm est},\omega_{\rm est})=(1.3,0.85)$ and $(\alpha_{\rm opt},\omega_{\rm opt})=(1.3,0.85)$.} \label{tab:pair_B_toep}
		\end{center}
\end{table}

\subsection{MGM as preconditioner in Krylov methods}\label{ssec:PPCG}

 We test the previously discussed block multigrid methods as preconditioners for solving the linear systems with the {PCG} method \cite{tatebe1993multigrid}. As test problems we consider the block Toeplitz matrices  $T_n(\mathbf{\textbf{f}}_{\mathbb{Q}_d})$ from Subsection \ref{ssec:FEM}, with $d=2,4,8$ and the matrix $T_n(\mathbf{f}^{(p,k)})$ of Subsection \ref{ssec:Splines} for  the pairs
$(p,k)$ equal to $(2,0)$, $(3,1)$ and $(3,0)$.  We use the built-in Matlab function \textit{pcg} and we set the zero vector as initial guess and the tolerance of the PCG method to $\epsilon=10^{-6}$.

Precisely, one iteration of the block symbol-based V-cycle method described in Subsection \ref{ssec:scalar_vs_block} is used as preconditioner for the PCG method. The resulting method is denoted by  $P_{\tiny{\mbox{block-symbol}}}$--PCG. We also test $P_{\tiny{\mbox{aggregate}}}$--PCG method where one iteration of the aggregated-based V-cycle method, tested in Subsections \ref{ssec:aggregation_experiments}--\ref{ssec:over_relaxation}, is used as preconditioner.  The V-cycle methods in both procedures use block Jacobi as pre and post smoothers at the finest level. However, we already observed that the coarser systems in matrix-valued multigrid approach maintain a block structure. Then, results of Table \ref{tab:simax_blockvsscalar_jacobi} suggest to exploit block Jacobi for the coarser levels. On the other hand, $P_{\tiny{\mbox{aggregate}}}$--PCG  uses the scalar Jacobi method at the coarser levels due to the scalar nature of the coarser linear systems in the aggregate-based approach. In all cases we select the smoothing and, when applicable, over-relaxation parameters that give the fastest convergence.

In Tables \ref{tab:PCG_tepl_Psimax_vs_Paggregation}  and \ref{tab:PCG_tepl_Psimax_vs_Paggregation_Bspline} we compare  the performances in terms of iterations needed for convergence and computational times. Both methods show a convergent and optimal behaviour. However, we highlight that the setup times $T_{\rm set}(s)$ for the block-symbol approach are bigger with respect to the ones of the aggregation-based strategy. This is expected since in the first case the setup involves block Jacobi smoothers and block grid transfer operators at the coarser levels. Consequently, even if the global number of iterations required by $P_{\tiny{\mbox{block-symbol}}}$--PCG is less than the one required by the $P_{\tiny{\mbox{aggregation}}}$--PCG, the aggregation-based is a preferable approach in combination with a Krylov subspace method.

\begin{footnotesize}
\begin{table}[htb]
	\begin{center}
		\begin{tabular}{c|c|ccc|ccc|ccc}
	&& \multicolumn{3}{c}{{$d=2$}}&\multicolumn{3}{c}{{$d=3$}}&\multicolumn{3}{c}{{$d=4$}}\\
			\hline
			
& $t $&  $T (s)$ & $T_{\rm set}(s)$ & {Iter} & $T (s)$ & $T_{\rm set}(s)$ & {Iter} & $T (s)$ &$T_{\rm set}(s)$  & {Iter} \\
\hline\hline
\multirow{5}{*}{${\tiny{\mbox{block-symbol}}}$} &		
						    12&0.0203 &0.0908 &6 &0.0291 &0.1430 &6&0.0318 &0.1997 &5\\
\cline{2-2}	&		13&0.0269 &0.3231 &6 &0.0430 &0.5268 &6&0.0467 &0.7395 &5\\
\cline{2-2}	&		14&0.0468 &1.2049 &6 &0.0771 &2.0112  &6&0.0864 &2.8552 &5\\
\cline{2-2}	&		15&0.0923 &4.5608 &6 &0.1457 &7.8515  &6&0.1644 &11.3633 &5\\
\cline{2-2}	&		16&0.1696 &17.9900 &6 &0.2924 &31.3316 &6&0.3260 &46.3115&5\\
\hline\hline	
\multirow{5}{*}{${\tiny{\mbox{aggregate}}}$} &
                            12&0.0139 &0.0658 &7 &0.0200 &0.1055 &8&0.0260 &0.1503  &8\\
\cline{2-2}	&		13&0.0265 &0.2429 &7 &0.0416 &0.3909 &8&0.0545 &0.5635  &8\\
\cline{2-2}	&		14&0.0372 &0.9009 &7 &0.0658 &1.4829 &8&0.0905 &2.2172   &8\\
\cline{2-2}	&		15&0.0729 &3.4918  &7&0.1228 &5.8319 &8&0.1658 &8.7442 &8\\
\cline{2-2}	&		16&0.1316 &13.5594&7&0.2361 &23.4031 &8&0.3282 &36.0278 &8\\
			\hline
		\end{tabular}
	\end{center}
	
	\caption{  {$P_{\tiny{\mbox{block-symbol}}}$--PCG and $P_{\tiny{\mbox{aggregate}}}$--PCG   number of iterations (IT),  CPU times $T(s)$ and setup times $T_{\rm set}(s)$ for the matrix $T_n(\mathbf{\textbf{f}}_{\mathbb{Q}_d})$.}\label{tab:PCG_tepl_Psimax_vs_Paggregation}
 }

\end{table}
\end{footnotesize}

\begin{footnotesize}
\begin{table}[htb]
	\begin{center}
		\begin{tabular}{c|c|ccc|ccc|ccc}
	&& \multicolumn{3}{c}{{$T_n(\mathbf{f}^{(2,0)})$}}&\multicolumn{3}{c}{{$T_n(\mathbf{f}^{(3,1)})$}}&\multicolumn{3}{c}{{$T_n(\mathbf{f}^{(3,0)})$}}\\
			\hline
			
& $t $&  $T (s)$ & $T_{\rm set}(s)$ & {Iter} & $T (s)$ & $T_{\rm set}(s)$ & {Iter} & $T (s)$ &$T_{\rm set}(s)$  & {Iter} \\
\hline\hline
\multirow{5}{*}{${\tiny{\mbox{block-symbol}}}$} &		
			    12&0.0178 &0.0936 &5 &0.0145 &0.0913 &5&0.0236&0.1432 &5\\
\cline{2-2}	&		13&0.0239 &0.3238 &5 &0.0196 &0.3064 &5&0.0352 &0.5247 &5\\
\cline{2-2}	&		14&0.0482 &1.2177 &5 &0.0411&1.1794 &5&0.0710 &2.0218 &5\\
\cline{2-2}	&		15&0.0735 &4.6862 &5 &0.0564&4.3287  &5&0.1238 &7.8767 &5\\
\cline{2-2}	&		16&0.1441 &18.2871 &5 &0.1028 &16.7949 &5&0.2379 &31.3311&5\\
\hline\hline	
\multirow{5}{*}{${\tiny{\mbox{aggregate}}}$} &
                    12&0.0105 &0.0708 &6 &0.0091 &0.0637 &6&0.0173 &0.1083  &7\\
\cline{2-2}	&		13&0.0254 &0.2461 &6 &0.0198 &0.2300 &6&0.0380 &0.3942  &7\\
\cline{2-2}	&		14&0.0316 &0.9123 &6 &0.0211 &0.7980 &6&0.0583 &1.4953   &7\\
\cline{2-2}	&		15&0.0598 &3.4895 &6&0.0313 &3.0448 &6&0.1112 &5.8689 &7\\
\cline{2-2}	&		16&0.1134 &13.5925&6&0.0583 &11.6552 &6&0.2033&23.4619 &7\\
			\hline
		\end{tabular}
	\end{center}
	
	\caption{  {$P_{\tiny{\mbox{block-symbol}}}$--PCG and $P_{\tiny{\mbox{aggregate}}}$--PCG   number of iterations (IT),  CPU times $T(s)$ and setup times $T_{\rm set}(s)$ for the matrix $T_n(\mathbf{f}^{(p,k)})$, 
 for the pairs
$(p,k)$ equal to $(2,0)$, $(3,1)$ and $(3,0)$.}\label{tab:PCG_tepl_Psimax_vs_Paggregation_Bspline}
 }

\end{table}
\end{footnotesize}

\section{Conclusions}\label{sec:conclusions}
In this paper, we investigated novel multigrid approaches for solving large block Toeplitz linear systems, emphasizing the analysis of symbols at coarse levels for V-cycle convergence of an aggregation-based approach and the efficiency of block Jacobi smoothers. Our findings simplify the theoretical analysis of symbol-based matrix-valued multigrid and make the choice of parameters computationally more feasible, reducing the calculations to scalar function evaluations. The efficiency of the aggregation-based MGM for block Toeplitz matrices is enhanced by an over-relaxation strategy, for which symbol computations are again the foundation for an a priori analysis. Through rigorous theoretical derivations and extensive numerical experiments, we provided a comparison among existing methods. To summarize, block Jacobi is preferable for larger blocks and the over-relaxed aggregation-based procedure is to be preferred to an approach which preserves the block structure at coarser levels only when used as a preconditioner.

While the efficacy of the aggregation-based procedure on block multilevel Toeplitz matrices was established in prior works \cite{An2024}, our future efforts will focus on exploring this approach in more complex contexts, specifically in multigrid methods for saddle point problems characterized by block (multilevel) Toeplitz submatrices. In particular, we aim to combine the approaches in \cite{saddle_point2,saddle_point1} with aggregation-base strategies.

 \section*{Acknowledgments}
The work of the second, third, and fourth authors is partly supported by “Gruppo Nazionale per il Calcolo Scientifico" (GNCS-INdAM).
Moreover, the work is supported by $\#$NEXTGENERATIONEU (NGEU) and funded by the Ministry of University and Research (MUR), National Recovery and Resilience Plan (NRRP), project MNESYS (PE0000006) – A Multiscale integrated approach to the study of the nervous system in health and disease (DN. 1553 11.10.2022).

\bibliographystyle{plain}

\end{document}